\newtheorem{thm}{Theorem}[section]
\newtheorem{prop}[thm]{Proposition}
\newtheorem{lem}[thm]{Lemma}
\newtheorem{cor}[thm]{Corollary}
\newtheorem{ques}[thm]{Question}
\theoremstyle{definition}
\newtheorem{df}[thm]{\bf Definition}
\DeclareMathOperator{\Tr}{Tr}
\title{On the sum of the largest and smallest eigenvalues of graphs with high odd girth} 
\author{Fredy Yip\thanks{Trinity College, University of Cambridge, United Kingdom. Email: \textbf{fy276@cam.ac.uk}.}}
\date{}
\begin{document}

\maketitle 

\begin{abstract}
    The sum $\lambda_1 + \lambda_n$ of the maximum and minimum eigenvalues, and the odd girth of a graph both measure bipartiteness. We seek to relate these measures. In particular, for an odd integer $k\geq 3$, let $\gamma_k$ denote the supremum of $\frac{\lambda_1 + \lambda_n}{n}$ over graphs without odd cycles of length less than $k$. The example of the $k$-cycle $C_k$ shows that $\gamma_k\geq \Omega(k^{-3})$. In their recent work, Abiad, Taranchuk, and Van Veluw showed that $\gamma_k\leq O(k^{-1})$ and asked to determine the asymptotics of $\gamma_k$. Using approximation theory, we show that $\gamma_k\leq O(k^{-3}(\log k)^3)$, giving a tight upper bound up to a poly-logarithmic factor. 
\end{abstract}

\textbf{Keywords:} spectral graph theory, odd girth, measures of bipartiteness

\textbf{MSC:} 05C38, 15A18

\section{Introduction}

For a simple graph $G$ on $n$ vertices, its eigenvalues $\lambda_1\geq \dots\geq \lambda_n$ refer to the eigenvalues of its adjacency matrix $A(G)$. The odd girth $g_{\text{odd}}$ of $G$ refers to the length of the shortest odd cycle in $G$; we take $g_{\text{odd}} = \infty$ if there are no odd cycles in $G$. 

A graph $G$ is bipartite if and only if $g_{\text{odd}} = \infty$ (\emph{i.e.}~$G$ does not contain odd cycles). Likewise, a connected graph $G$ is bipartite if and only if $\lambda_1 + \lambda_n = 0$. Therefore, both the sum $\lambda_1 + \lambda_n$ of eigenvalues and the odd girth $g_{\text{odd}}$ form measures of the bipartiteness of $G$. 

It is therefore of interest to study if these measures can be directly related, especially in the ``bipartite limit" where $\lambda_1 + \lambda_n\rightarrow 0$ and $g_{\text{odd}}\rightarrow\infty$. Taking the $m$-fold blow-up of $G$ scales $\lambda_1 + \lambda_n$ by a factor of $m$, whilst preserving the odd girth $g_{\text{odd}}$. Therefore, we consider the normalisation $\frac{\lambda_1 + \lambda_n}{n}$ of the eigenvalue sum and its relationship with $g_{\text{odd}}$. 

For any fixed odd girth $g_{\text{odd}} = k$, it is possible that $\lambda_1 + \lambda_n = 0$, as shown, for instance, by the disjoint union of a $k$-cycle with an arbitrarily large balanced complete bipartite graph. Even requiring connectedness, it is possible for $\frac{\lambda_1 + \lambda_n}{n}\rightarrow 0$. 

We therefore focus instead on upper bounding $\frac{\lambda_1 + \lambda_n}{n}$ given $g_{\text{odd}} \geq k$. 

\begin{df}
    For an odd integer $k\geq 3$, let 
    \begin{equation*}
        \gamma_k = \sup\left\{\frac{\lambda_1 + \lambda_n}{n}\;\middle|\;g_{\text{odd}}\geq k\right\}
    \end{equation*}
    denote the supremum of $\frac{\lambda_1 + \lambda_n}{n}$ over all positive integers $n$ and graphs $G$ on $n$ vertices with $g_{\text{odd}}\geq k$. 
\end{df}

Abiad, Taranchuk, and Van Veluw~\cite{abiad2025} recently studied the asymptotics of $\gamma_k$ as $k\rightarrow\infty$. Prior to this, the $k = 5$ case, corresponding to triangle-free graphs, was studied by Csikv\'{a}ri~\cite{csikvari2022}, who gave the upper bound
\begin{equation} \label{triangle-free}
    \gamma_5\leq 3 - 2\sqrt{2}\approx 0.171572875. 
\end{equation}
A quantity related to $\lambda_1 + \lambda_n$ is the minimum eigenvalue $q_n$ of the signless Laplacian matrix $\overline{L}(G) = D(G) + A(G)$, where $D(G)$ is the diagonal matrix encoding the degree sequence of $G$. In a regular graph, we have $\lambda_1 + \lambda_n = q_n$. 

In the setting of regular triangle-free graphs, the analogous bound 
\begin{equation*}
    \frac{\lambda_1 + \lambda_n}{n} = \frac{q_n}{n}\leq 3 - 2\sqrt{2}
\end{equation*}
to (\ref{triangle-free}) was first given by Brandt~\cite{brandt1998}. Curiously, whilst the upper bound for $\frac{q_n}{n}$ has been improved to 0.1547 for general triangle-free graphs~\cite{balogh2023}, no corresponding improvement to (\ref{triangle-free}) has yet been given for the supremum $\gamma_5$ of $\frac{\lambda_1 + \lambda_n}{n}$. In our Concluding Remarks, we shall give a small improvement to (\ref{triangle-free}), namely
\begin{equation*}
    \gamma_5\leq \frac{1 - 14^{-1/3}}{1 + 14^{1/3}}\approx 0.171572529, 
\end{equation*}
which shall further be shown to be optimal without major changes in the proof approach. 

As noted by Abiad, Taranchuk, and Van Veluw~\cite{abiad2025}, a lower bound $\gamma_k = \Omega(k^{-3})$ could be given by the odd cycle $C_k$. Here, we have $n = k$, $\lambda_1 = 2$, $\lambda_n = 2\cos\left(\frac{(k - 1)/2}{k}\cdot 2\pi\right) = -2\cos\left(\frac{\pi}{k}\right)$, and therefore
\begin{equation*}
    \gamma_k\geq \frac{\lambda_1 + \lambda_n}{n} = \frac{2}{k}\left(1 - \cos\left(\frac{\pi}{k}\right)\right) = (\pi^2 - o(1))k^{-3} = \Omega(k^{-3}). 
\end{equation*}
On the other hand, Abiad, Taranchuk, and Van Veluw~\cite{abiad2025} established the upper bound $\gamma_k = O(k^{-1})$, and asked for the asymptotic behaviour of $\gamma_k$. 

\begin{thm}[Abiad, Taranchuk, and Van Veluw~\cite{abiad2025}, Theorem~1.3] \label{k^-1}
    $\gamma_k = O(k^{-1})$. 
\end{thm}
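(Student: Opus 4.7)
The plan is to apply the odd-girth hypothesis through the trace-of-polynomial identity and extract a bound via a Chebyshev test polynomial.

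The starting observation is that $\operatorname{Tr}(A^m)$ counts closed walks of length $m$ in $G$, and any closed walk of odd length traverses an odd cycle of length at most $m$; hence $g_{\text{odd}} \geq k$ forces $\operatorname{Tr}(A^m) = 0$ for every odd $m < k$. By linearity, for every odd polynomial $p$ with $\deg p < k$,
\[
    \sum_{i=1}^n p(\lambda_i) = 0, \quad\text{so}\quad p(\lambda_1) + p(\lambda_n) = -\sum_{i=2}^{n-1} p(\lambda_i).
\]

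Normalise so that $\lambda_1 = 1$ and write $\lambda_n = -(1-\eta)$ with $\eta = (\lambda_1+\lambda_n)/\lambda_1$, and take $p = T_{k-2}$, the Chebyshev polynomial of the first kind (which is odd, since $k$ is odd). Since $\|T_{k-2}\|_{\infty,[-1,1]} = 1$ and $p(\lambda_n) = -T_{k-2}(1-\eta)$ by oddness, the identity becomes
\[
    1 - T_{k-2}(1-\eta) = -\sum_{i=2}^{n-1} T_{k-2}(\lambda_i),
\]
whose right-hand side has absolute value at most $n-2$. The sharp near-boundary expansion $T_{k-2}(1-\eta) \approx \cos((k-2)\sqrt{2\eta}) \approx 1 - (k-2)^2 \eta$ as $\eta \to 0$ (coming from $T_{k-2}(\cos\theta) = \cos((k-2)\theta)$ and $\arccos(1-\eta) \sim \sqrt{2\eta}$) then gives $(k-2)^2 \eta \leq n - 2$, so $\eta \leq O(n/k^2)$ and consequently
\[
    \frac{\lambda_1 + \lambda_n}{n} = \frac{\lambda_1 \eta}{n} \leq O\!\left(\frac{\lambda_1}{k^2}\right).
\]

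When $\lambda_1 = O(k)$ this already gives the desired $O(1/k)$ bound. The principal obstacle is the regime of large $\lambda_1 \gg k$, where the crude estimate $|\sum_{i=2}^{n-1} T_{k-2}(\lambda_i)| \leq n-2$ is too lossy. I would close that case by combining the power-sum identity at $m=k-2$, which via $\sum_{\lambda_i < 0}|\lambda_i|^{k-2} \leq n\,|\lambda_n|^{k-2}$ gives $\eta \leq O(\log n / k)$ (hence $\alpha/n = O(1/k)$ whenever $\lambda_1 \lesssim n/\log n$), with a Moore-type argument for still denser graphs: $g_{\text{odd}} \geq k$ forces every ball of radius $\lfloor(k-1)/2\rfloor$ in $G$ to be bipartite, which constrains the maximum degree $\Delta$ and thus $\lambda_1 \leq \Delta$ in terms of $n$ and $k$. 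The main theorem of the present paper, using a more refined approximation-theoretic construction, should supersede this case analysis and yield the improved $O(k^{-3} \log^3 k)$ bound uniformly in $\lambda_1$.
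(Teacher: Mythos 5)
There are two genuine gaps. The first is the Chebyshev step. After normalising so that $\lambda_1=1$, every $\lambda_i$ lies in $[-1,1]$, so the left-hand side $1-T_{k-2}(1-\eta)$ of your identity lies in $[0,2]$ while your bound on the right-hand side is $n-2$; the inequality is vacuous for $n\geq 4$. The expansion $T_{k-2}(1-\eta)\approx 1-(k-2)^2\eta$ is valid only when $(k-2)\sqrt{2\eta}$ is small, i.e.\ when $\eta=O(k^{-2})$ is already known; for larger $\eta$ the polynomial oscillates inside $[-1,1]$ and no lower bound of the form $1-T_{k-2}(1-\eta)=\Omega(k^2\eta)$ holds. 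This is precisely why Proposition~\ref{broad spectrum} normalises by $|\lambda_n|$ rather than $\lambda_1$: the point $\lambda_1/|\lambda_n|$ then lies \emph{outside} $[-1,1]$, where the Chebyshev polynomial grows monotonically and exponentially, and the interior terms are controlled by $T_j\geq -1$ together with the inhomogeneous inequality $\sum_i\lambda_i^2=2e(G)\leq n\lambda_1$ — which must enter somewhere, since the odd trace identities alone are scale-invariant and cannot bound $\frac{\lambda_1+\lambda_n}{n}$. The second gap is the endgame for dense graphs: odd girth imposes no nontrivial upper bound on $\Delta$ or $\lambda_1$, as the disjoint union of $K_{n/2,n/2}$ with a single $k$-cycle has odd girth $k$ and $\lambda_1=\Delta=n/2$, so the proposed Moore-type argument cannot close the case $\lambda_1\gtrsim n/\log n$.

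The good news is that your step (a) is essentially the entire proof once one estimate is sharpened so as to bring in the edge count. For each negative eigenvalue write $|\lambda_i|^{k-2}\leq|\lambda_n|^{k-4}\lambda_i^2$, so that (for nonempty $G$, $k\geq 5$)
\begin{equation*}
    \lambda_1^{k-2}\leq\sum_{\lambda_i<0}|\lambda_i|^{k-2}\leq|\lambda_n|^{k-4}\sum_{i=1}^n\lambda_i^2\leq n\lambda_1|\lambda_n|^{k-4},
\end{equation*}
which gives $r^{k-4}\leq n/\lambda_1$ for $r=\lambda_1/|\lambda_n|\geq 1$. Then $1-1/r\leq\log r\leq\frac{1}{k-4}\log(n/\lambda_1)$, and hence
\begin{equation*}
    \frac{\lambda_1+\lambda_n}{n}=\frac{\lambda_1}{n}\left(1-\frac{|\lambda_n|}{\lambda_1}\right)\leq\frac{1}{k-4}\cdot\frac{\lambda_1}{n}\log\frac{n}{\lambda_1}\leq\frac{1}{e(k-4)},
\end{equation*}
since $t\log(1/t)\leq 1/e$ for $t\in(0,1]$. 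This handles all densities uniformly, dispenses with both the Chebyshev step and the Moore argument, and is exactly the combination of a single odd trace identity at exponent $k-2$ with $\sum_i\lambda_i^2\leq n\lambda_1$ that the paper attributes to Abiad, Taranchuk, and van Veluw.
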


\begin{ques}[Abiad, Taranchuk, and Van Veluw~\cite{abiad2025}, Problem~1] \label{gamma_k ques}
    What is the asymptotic behaviour of $\gamma_k$, as $k\rightarrow\infty$?
\end{ques}

We shall make progress towards Question~\ref{gamma_k ques} by giving an upper bound which is tight up to a poly-logarithmic factor. 

\begin{thm} \label{main}
    $\gamma_k = O(k^{-3}(\log k)^3)$. 
\end{thm}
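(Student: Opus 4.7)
The plan is to combine a spectral trace identity arising from the odd girth hypothesis with Chebyshev-type approximation theory on $[-\lambda_1, \lambda_1]$.

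Since any closed walk of odd length $j$ must traverse an odd cycle of length at most $j$, the hypothesis $g_{\text{odd}}(G) \geq k$ forces $\Tr(A^j) = 0$ for every odd integer $1 \leq j \leq k - 2$. Consequently, for any odd polynomial $p$ of degree at most $k - 2$,
\[
    p(\lambda_1) + p(\lambda_n) = \Tr(p(A)) - \sum_{i = 2}^{n - 1} p(\lambda_i) = -\sum_{i = 2}^{n - 1} p(\lambda_i).
\]
Writing $L = \lambda_1$ and $\delta = \lambda_1 + \lambda_n$, the left side equals $p(L) - p(L - \delta)$, which is approximately $p'(L) \delta$ for small $\delta$, while the right side is at most $(n - 2) \|p\|_{L^\infty([-L, L])}$. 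The problem thus reduces to finding an odd polynomial $p$ of degree at most $k - 2$ that maximizes $p'(L)/\|p\|_\infty$ (and, more subtly, controls the actual sum $\sum_{i=2}^{n-1} p(\lambda_i)$).

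A first attempt uses the Chebyshev polynomial $p(x) = T_{k - 2}(x/L)$, which is odd since $k$ is odd and satisfies $\|p\|_\infty = 1$ with $p'(L) = (k - 2)^2/L$, yielding only $\delta/n \leq O(L/k^2)$ by Markov's inequality. Since the Markov ratio is sharp for $L^\infty$-bounded polynomials, the extra factor of $k$ needed to reach $O(L\log^3 k /k^3)$ cannot come from any single polynomial; it must come from replacing the crude termwise estimate $|\sum_{i=2}^{n - 1} p(\lambda_i)| \leq (n - 2)\|p\|_\infty$ with something sharper. To do this, I would use a dyadic decomposition of $[-L, L]$ into $O(\log k)$ bands $I_t$ at scales $2^{-t} L$ near $\pm L$, together with a family of Chebyshev-type polynomials $p_t$ of degree at most $k - 2$ localized to each band. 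A balanced weighted combination $p = \sum_t w_t p_t$ can be arranged to have effective derivative $p'(L) \geq c \cdot k^3/(L \log^3 k)$ while keeping $\|p\|_\infty$ bounded, after invoking Christoffel-function-type counting estimates for the number of eigenvalues in each band (estimates which follow from the even-moment data $\Tr(A^{2j})$ being controlled for $2j < k$). The polylogarithmic factor $\log^3 k$ arises from combining three dyadic series.

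The main obstacle is the precise construction and analysis of this weighted polynomial combination. One must ensure that (a) the dyadic polynomials combine constructively at $\pm L$ but nearly cancel elsewhere, (b) the total $\|p\|_\infty$ stays bounded despite each $p_t$ having large derivative at $L$, and (c) the Christoffel-type eigenvalue counts are sharp enough on each band. Since each $p_t$ in isolation achieves only the $O(k^2/L)$ Markov ratio, the $k^3$ scaling appears only through the multiscale summation. I expect this construction and its associated bookkeeping to occupy the bulk of the proof, with the polylogarithmic loss being a natural artifact of the dyadic approach and plausibly the reason why the author obtains $\log^3 k$ rather than the conjecturally optimal constant.
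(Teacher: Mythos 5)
There is a genuine gap in your proposal, and it sits exactly where you locate the "bulk of the proof." Your setup (the trace identities $\Tr(A^j)=0$ for odd $j\leq k-2$, hence $\sum_i p(\lambda_i)=0$ for odd polynomials $p$ of degree at most $k-2$, attacked with Chebyshev polynomials) matches the paper's Lemma~\ref{odd poly} and the start of Proposition~\ref{broad spectrum}. But the multiscale construction you sketch to gain the extra factor of $k$ does not work as described. Any weighted combination $\sum_t w_t p_t$ of polynomials of degree at most $k-2$ is itself a polynomial of degree at most $k-2$, so by the very Markov inequality you cite it cannot have derivative of order $k^3/(L\log^3 k)$ at the endpoint while keeping $\|p\|_{L^\infty([-L,L])}$ bounded; the two requirements you impose on $p$ are mutually exclusive, and the burden therefore falls entirely on replacing the bound $(n-2)\|p\|_\infty$ by an eigenvalue-counting estimate. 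Here your proposed input fails: the odd-girth hypothesis gives \emph{no} control on the even moments $\Tr(A^{2j})$ for $2j\geq 4$ (closed walks of even length exist in abundance in any graph with edges), so the "Christoffel-function-type counting estimates... from the even-moment data" are not available. The only usable inhomogeneous fact is $\Tr(A^2)=2e(G)\leq n\lambda_1$, and your proposal does not explain how a single $L^2$-type bound supports a full dyadic decomposition into $O(\log k)$ bands.

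The paper's route is different and worth contrasting. In Proposition~\ref{broad spectrum} the Chebyshev polynomial is normalised by $|\lambda_n|$ rather than by $\lambda_1$ and weighted by $x^2$: applying Lemma~\ref{odd poly} to $p(x)=x^2T_{k-4}(x/|\lambda_n|)$ and using $T_{k-4}\geq -1$ on $[-1,\infty)$ together with $\sum_i\lambda_i^2\leq n\lambda_1$ gives $T_{k-4}(\lambda_1/|\lambda_n|)<n/\lambda_1$, hence $\lambda_1/|\lambda_n|-1\leq \frac{4}{k^2}\log^2(2n/\lambda_1)$ and
\begin{equation*}
    \frac{\lambda_1+\lambda_n}{n}\leq \frac{4}{k^2}\cdot\frac{\lambda_1}{n}\log^2\!\left(\frac{2n}{\lambda_1}\right).
\end{equation*}
The crucial feature is that this bound is proportional to $\lambda_1/n$. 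The extra factor of roughly $k/\log k$ beyond the Markov barrier is then obtained not from a better polynomial in the approximation-theoretic sense but from a case split on $\lambda_1/n$: when $\lambda_1/n\leq 100\log k/k$ the display above already gives $O(k^{-3}\log^3k)$, and when $\lambda_1/n$ is larger the paper switches (Proposition~\ref{high lambda_1}) to an eigenvalue-adapted odd polynomial $x^{k-4d^--2}\prod_{i=n-d^-+1}^{n}(x^2-\lambda_i^2)^2$ that vanishes at the $d^-\leq 4n/\lambda_1$ most negative eigenvalues, yielding the exponential bound $\frac{\lambda_1+\lambda_n}{n}\leq 4\cdot 2^{-k\lambda_1/(16n)}$. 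Your proposal has no mechanism at all for this large-$\lambda_1$ regime (e.g.\ $\lambda_1=\Theta(n)$), where any bound of the shape $O\bigl(k^{-2}\cdot\frac{\lambda_1}{n}\log^2(n/\lambda_1)\bigr)$ is only $O(k^{-2})$, so even if the dyadic scheme were repaired it would not by itself prove the theorem.
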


Our proof of Theorem~\ref{main} combines two arguments (Propositions~\ref{broad spectrum} and~\ref{high lambda_1}), which are effective over different regimes of $\frac{\lambda_1}{n}$. 

\begin{prop} \label{broad spectrum}
    Let $k\geq 100$ be an odd integer. In a graph $G$ on $n$ vertices with $g_{\text{odd}}\geq k$, if $\lambda_1\geq n/k^3$, then
    \begin{equation*}
        \frac{\lambda_1 + \lambda_n}{n}\leq \frac{4}{k^2}\cdot\frac{\lambda_1}{n}\left(\log\left(\frac{2n}{\lambda_1}\right)\right)^2. 
    \end{equation*}
\end{prop}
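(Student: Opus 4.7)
The fundamental input from the odd-girth hypothesis is that $\mathrm{Tr}(p(A(G)))=0$ for every odd polynomial $p$ of degree at most $k-2$. This is because $\mathrm{Tr}(A^j)$ counts closed walks of length $j$ in $G$, and the edge-multiset of any closed walk of odd length $j<k$ admits an Eulerian decomposition into edge-disjoint cycles whose lengths sum to $j$; at least one cycle has odd length, and it has length at most $j<k$, contradicting $g_{\mathrm{odd}}(G)\geq k$. Hence $\mathrm{Tr}(A^j)=0$ for every odd $j$ with $1\leq j<k$, and this extends linearly to any odd polynomial $p$ of degree at most $k-2$.

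I would then choose $p$ of the form $p(x)=x\,q(x^2)$, where $q\geq 0$ is a polynomial of degree at most $(k-3)/2$ which is non-decreasing on $[0,\lambda_1^2]$. Writing $s=\lambda_1+\lambda_n$ (assume $s>0$; else the inequality is trivial) and $b=-\lambda_n\in[0,\lambda_1]$, the vanishing trace identity $\sum_i p(\lambda_i)=0$ rearranges to
\begin{equation*}
s\,q(\lambda_1^2) \;\leq\; \lambda_1 q(\lambda_1^2)-b\,q(b^2) \;=\; -\sum_{i=2}^{n-1}\lambda_i\,q(\lambda_i^2),
\end{equation*}
so it suffices to upper-bound the right-hand side in terms of $q$. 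The key spectral input is the second-moment bound $\sum_i\lambda_i^2=2|E(G)|\leq n\lambda_1$ (the average degree is at most $\lambda_1$), which via Cauchy--Schwarz controls the contribution of the ``bulk'' eigenvalues (those with $\lambda_i^2$ small), while Markov's inequality bounds $|\{i:\lambda_i^2>\alpha\}|\leq n\lambda_1/\alpha$ for any threshold $\alpha>0$, controlling the ``tail''.

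The polynomial I have in mind is a shifted-and-scaled Chebyshev polynomial of the first kind, $q(y)=T_m\bigl(2y/\alpha-1\bigr)$ with $m=\lfloor(k-3)/2\rfloor$, so that $|q(y)|\leq 1$ on $[0,\alpha]$ while $q(\lambda_1^2)=\cosh\!\bigl(m\,\mathrm{arccosh}(2\lambda_1^2/\alpha-1)\bigr)$ grows rapidly when $\alpha\ll\lambda_1^2$. With $\alpha\in(0,\lambda_1^2)$ tuned so that $m\cdot\mathrm{arccosh}(2\lambda_1^2/\alpha-1)$ is of order $\log(2n/\lambda_1)$, the spike $q(\lambda_1^2)$ is of polynomial size in $2n/\lambda_1$; after splitting the right-hand side into bulk ($\sum_{\lambda_i^2\leq\alpha}|\lambda_i|\leq\sqrt{n\sum\lambda_i^2}\leq n\sqrt{\lambda_1}$) and tail ($\leq \lambda_1\,q(\lambda_1^2)\cdot n\lambda_1/\alpha$) contributions and balancing, the claimed estimate $s\leq 4\lambda_1\log^2(2n/\lambda_1)/k^2$ should drop out.

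The main obstacle is handling the tail: eigenvalues with $\lambda_i^2$ close to $\lambda_1^2$ ``see'' the Chebyshev spike $q(\lambda_1^2)$, and although there are at most $n\lambda_1/\alpha$ of them, each can contribute at the same scale as the $\lambda_1$ and $\lambda_n$ terms on the left. The hypothesis $\lambda_1\geq n/k^3$ (and $k\geq 100$) is what makes the balance work: it ensures $\log(2n/\lambda_1)\leq 3\log k+\log 2$ stays moderate, so the degree budget $m=\Theta(k)$ of $q$ is enough to push the gain at $\lambda_1^2$ polynomially while the tail ``eigenvalue count'' $n\lambda_1/\alpha$ remains absorbable. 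Balancing those two quantities is precisely where the $\log^2(2n/\lambda_1)$ factor appears in the final bound.
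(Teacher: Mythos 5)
Your setup is right: the vanishing of $\sum_i p(\lambda_i)$ for odd polynomials of degree at most $k-2$, the second-moment bound $\sum_i\lambda_i^2\leq n\lambda_1$, and the idea of feeding a Chebyshev polynomial into the trace identity are all exactly the ingredients of the paper's proof. But the specific polynomial you choose does not work, and the obstacle you flag at the end is fatal rather than surmountable. With $p(x)=x\,q(x^2)$, $q(y)=T_m(2y/\alpha-1)$, your inequality after dividing by $q(\lambda_1^2)$ reads $s\leq(\text{bulk})/q(\lambda_1^2)+n\lambda_1^2/\alpha$, and since you must take $\alpha<\lambda_1^2$ for there to be any spike at all, the tail term satisfies $n\lambda_1^2/\alpha>n\geq\lambda_1\geq s$: the bound is vacuous for every admissible $\alpha$, and no balancing can rescue it. The underlying problem is the negative eigenvalues in $(\lambda_n,-\sqrt{\alpha})$: each contributes $-\lambda_i q(\lambda_i^2)>0$ at essentially the same scale $b\,q(b^2)$ as the term you are trying to isolate on the left, and Markov counting of them is far too weak. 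The hypothesis $\lambda_1\geq n/k^3$ plays no role in any such balance; in the paper it is only used to guarantee that the resulting gap $r-1$ is at most $1/2$, so that an elementary linearization of the Chebyshev growth applies.

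The missing idea is to anchor the Chebyshev window at $|\lambda_n|$ itself rather than at a tunable threshold, and to weight by $x^2$ rather than $x$: the paper takes $p(x)=x^2T_{k-4}(x/|\lambda_n|)$. Then \emph{every} eigenvalue satisfies $\lambda_i/|\lambda_n|\geq-1$, so $T_{k-4}(\lambda_i/|\lambda_n|)\geq-1$ for all $i\geq 2$ and there is no negative tail at all; positive eigenvalues exceeding $|\lambda_n|$ only help, since they enter the rearranged identity with the favourable sign. This yields $\lambda_1^2T_{k-4}(r)\leq\sum_{i\geq2}\lambda_i^2<n\lambda_1$ with $r=\lambda_1/|\lambda_n|$, hence $T_{k-4}(r)<n/\lambda_1$, and the growth $T_{k-4}(r)\geq\tfrac12(1+\sqrt{2(r-1)})^{k-4}$ just outside $[-1,1]$ converts this into $r-1<\tfrac{4}{k^2}\log^2(2n/\lambda_1)$; finally $\lambda_1+\lambda_n=|\lambda_n|(r-1)\leq\lambda_1(r-1)$. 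Note also that the relevant quantity is the multiplicative gap $r-1$ between $\lambda_1$ and $|\lambda_n|$, not a comparison of the spike at $\lambda_1$ against a bulk cutoff, which is why the paper's normalization is the natural one.
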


The condition that $\lambda_1\geq n/k^3$ is a minor technical assumption. In the case that $\lambda_1 < n/k^3$, we immediately have $\frac{\lambda_1 + \lambda_n}{n}\leq \frac{\lambda_1}{n} < k^{-3}$. 

Note that it follows from Proposition~\ref{broad spectrum} alone that $\gamma_k = O(k^{-2})$. 

\begin{prop} \label{high lambda_1}
    Let $k\geq 100$ be an odd integer. In a graph $G$ on $n$ vertices with $g_{\text{odd}}\geq k$, if $\lambda_1\geq 16n/k$, then
    \begin{equation*}
        \frac{\lambda_1 + \lambda_n}{n}\leq 4\cdot 2^{-\frac{k\lambda_1}{16n}}. 
    \end{equation*}
\end{prop}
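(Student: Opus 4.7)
The approach I would take is to combine a Rayleigh-quotient estimate from a BFS-based bipartition with a concentration bound on the Perron eigenvector. Let $u$ be the Perron eigenvector of $A(G)$ normalised to $\|u\|_2 = 1$, let $v_0$ be a vertex maximising $u$, and let $V_0, V_1, \dots$ be the BFS distance layers from $v_0$. Writing $L = (k-1)/2$, the odd-girth hypothesis forces every intra-layer edge $ab \in E(G[V_i])$ to satisfy $i \geq L$: otherwise the two BFS paths from $v_0$ to $a$ and $b$ combined with the edge $ab$ would form a closed odd walk of length at most $2i + 1 < k$, contradicting $g_{\text{odd}} \geq k$. Defining $\epsilon(v) = (-1)^{d(v_0, v)}$ and $w(v) = \epsilon(v) u(v)$, the Rayleigh quotient decomposes by parity of endpoints into $\langle w, Aw \rangle = -\lambda_1 + 4 \sum_{i \geq L} \sum_{ab \in E(G[V_i])} u(a) u(b)$. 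The variational principle together with the interlacing bound $\lambda_1(G[V_i]) \leq \lambda_1$ then yields
\begin{equation*}
\lambda_1 + \lambda_n \;\leq\; 4 \sum_{i \geq L} \sum_{ab \in E(G[V_i])} u(a) u(b) \;\leq\; 2\lambda_1 \cdot Z, \quad\text{where } Z := \sum_{v:\, d(v_0, v) \geq L} u(v)^2,
\end{equation*}
reducing the problem to the concentration estimate $Z \leq (2n/\lambda_1) \cdot 2^{-k\lambda_1/(16n)}$.

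The main obstacle is proving this exponential concentration of the Perron $\ell^2$-mass inside the BFS ball $B_{L-1}(v_0)$. The intuition is that $\lambda_1 \geq 16n/k$ forces $G$ to be ``dense enough'' that, coupled with the tree-like structure of the first $L$ BFS layers granted by the odd-girth hypothesis, each successive layer must absorb a constant fraction of the remaining $u$-mass, depleting it geometrically over the roughly $k/2$ allowed layers. My first attempt would be to use the Perron identity $\lambda_1 \|u|_{V_i}\|_2^2 = E_{i-1, i} + E_{i, i+1}$ (valid for $i < L$, where $E_{j, j'} := \sum_{ab \in E,\, a \in V_j,\, b \in V_{j'}} u(a) u(b)$ is the weighted boundary between adjacent layers), combined with a Cauchy--Schwarz bound on the boundary edges, to produce a layer-by-layer recurrence of the form $\|u|_{V_{\geq i+1}}\|_2^2 \leq (1 - c) \|u|_{V_{\geq i}}\|_2^2$ with a decay rate $c$ governed by $\lambda_1/n$.

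The hard part will be getting the decay base to take the required exponential form $2^{-\Omega(\lambda_1/n)}$ per BFS step, rather than a weaker factor. If a direct layer recursion proves too crude, alternative routes I would consider are: a dyadic-scale concentration argument grouping BFS layers into geometrically growing blocks; engineering an odd polynomial $p$ of degree $< k$ in $A$ whose trace vanishes by the identities $\Tr(A^{2j+1}) = 0$ for $2j+1 < k$ and which separates $\{\lambda_1, \lambda_n\}$ exponentially from the interior eigenvalues (with the density hypothesis entering through the $\ell^2$-bound $\sum_{i=2}^{n-1} \lambda_i^2 \leq 2|E(G)| \leq n^2$); or a random-walk / mixing-time argument treating $\lambda_1/n$ as the relevant spectral parameter governing the $\lambda_n$-direction gap of $A/\lambda_1$.
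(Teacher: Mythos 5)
Your reduction is valid as far as it goes: the BFS-layer parity argument correctly shows that intra-layer edges can only occur in layers $V_i$ with $i\geq L=(k-1)/2$, and the signed test vector $w=\epsilon\cdot u$ together with interlacing does give
\begin{equation*}
\lambda_1+\lambda_n\leq 4\sum_{i\geq L}\sum_{ab\in E(G[V_i])}u(a)u(b)\leq 2\lambda_1 Z .
\end{equation*}
However, the concentration estimate you reduce to, $Z\leq (2n/\lambda_1)\cdot 2^{-k\lambda_1/(16n)}$, is not just the ``main obstacle'' left open --- it is false. Take two disjoint copies of $K_{m,m}$ joined by a single path of length $k$, with $m\gg k$. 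This graph is bipartite, so $g_{\text{odd}}=\infty\geq k$; it has $n\approx 4m$ and $\lambda_1\approx m\approx n/4\geq 16n/k$, so it satisfies all your hypotheses. By the swap symmetry, the Perron vector puts $\ell^2$-mass $1/2-o(1)$ on the second blob, which lies at distance at least $k-2\geq L$ from $v_0$. Hence $Z\geq 1/2-o(1)$, while your target bound is $\approx 8\cdot 2^{-k/64}\rightarrow 0$. The same example defeats the weaker layer-by-layer recurrence $\|u|_{V_{\geq i+1}}\|_2^2\leq(1-c\lambda_1/n)\|u|_{V_{\geq i}}\|_2^2$: along the connecting path the tail mass $M_i$ stays essentially constant at $1/2$ for about $k$ consecutive layers. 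The underlying issue is that the hypotheses $\lambda_1\geq 16n/k$ and $g_{\text{odd}}\geq k$ place no restriction on how much Perron mass sits far from $v_0$; what makes $\lambda_1+\lambda_n$ small in such examples is the absence (or weighted smallness) of intra-layer edges, which your passage from $\sum_{ab\in E(G[V_i])}u(a)u(b)$ to $\lambda_1\|u|_{V_i}\|_2^2$ discards. So the proposal as written cannot be completed.

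For comparison, the paper's proof avoids eigenvectors and geometry entirely. It works only with the trace identities $\sum_i p(\lambda_i)=0$ for odd polynomials $p$ of degree at most $k-2$ together with $\sum_i\lambda_i^2\leq n\lambda_1$. The hypothesis $\lambda_1\geq 16n/k$ is used to show that at most $d\leq 4n/\lambda_1\leq k/4$ eigenvalues have modulus at least $\lambda_1/2$; one then applies the trace identity to the tailored odd polynomial $p(x)=x^{k-4d^--2}\prod_{i>n-d^-}(x^2-\lambda_i^2)^2$, which vanishes at the large negative eigenvalues, is nonnegative on $[0,\infty)$, and is exponentially small on $[-\lambda_1/2,\lambda_1/2]$; comparing $p(\lambda_1)$ from above and below forces $1-|\lambda_n|/\lambda_1$ to be exponentially small. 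This is essentially the third fallback you list at the end, but you would need to actually construct such a polynomial and carry out the comparison; as it stands, that route is only named, not executed.
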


Proposition~\ref{high lambda_1} shows an at least exponential decay of $\frac{\lambda_1 + \lambda_n}{n}$ as $\frac{k\lambda_1}{n}$ increases. For some absolute constants $c_1, c_2 > 0$, when $\lambda_1/n\notin [c_1k^{-1}(\log k)^{-2}, c_2 k^{-1}\log k]$, the stronger bound $\frac{\lambda_1 + \lambda_n}{n}\leq O(k^{-3})$ may, in fact, be shown from Propositions~\ref{broad spectrum} and~\ref{high lambda_1}. 

\section{Proof} \label{proof section}

\textbf{Notation.} We shall use $\log$ to refer to the natural logarithm. Let an \emph{odd polynomial} refer to a polynomial which only contains monomials of odd exponents. 

\begin{lem} \label{odd poly}
    Let $k\geq 3$ be an odd integer. In a graph $G$ on $n$ vertices with odd girth $g_{\text{odd}}\geq k$, for any odd polynomial $p$ of degree at most $k - 2$, we have
    \begin{equation} \label{trace}
        \sum_{i = 1}^n p(\lambda_i) = 0. 
    \end{equation}
\end{lem}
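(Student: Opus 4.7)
The plan is to translate the sum $\sum_i p(\lambda_i)$ into a count of closed walks in $G$ via the trace formula, and then use the odd girth condition to kill every relevant term.

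First I would write $p(x) = \sum_{\substack{1 \leq j \leq k-2 \\ j \text{ odd}}} c_j x^j$, so that by linearity it suffices to establish the moment identity
\begin{equation*}
    \sum_{i=1}^n \lambda_i^j = 0 \qquad \text{for every odd } j \text{ with } 1 \leq j \leq k-2.
\end{equation*}
The left-hand side equals $\Tr(A(G)^j)$, which is the number of closed walks of length $j$ in $G$.

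Next I would invoke the standard fact that a closed walk of odd length $j$ in $G$ contains an odd cycle of length at most $j$. (A short proof by strong induction on $j$: if the closed walk is a cycle, we are done; otherwise pick a repeated vertex, split the walk into two shorter closed subwalks, note that one of them must have odd length, and apply induction.) Since $j \leq k - 2 < k \leq g_{\text{odd}}(G)$, no such odd cycle exists, so there are no closed walks of length $j$, giving $\Tr(A(G)^j) = 0$. Summing the contributions of each monomial in $p$ yields (\ref{trace}).

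The argument is essentially one-step once the trace-walk dictionary is in place, so there is no real obstacle; the only thing one must be careful about is to cover both the case $j = 1$ (where the statement $\Tr(A) = 0$ is immediate from the graph being loopless) and the remaining odd exponents $3 \leq j \leq k-2$ uniformly via the closed-walk observation. Note that the parity matching works out because $k$ is odd, so the odd integers in $[1, k-2]$ are exactly $1, 3, \dots, k-2$, matching the admissible exponents of $p$.
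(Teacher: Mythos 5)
Your proposal is correct and matches the paper's proof: both reduce to the moment identities $\Tr(A(G)^j)=0$ for odd $j\leq k-2$ via the closed-walk interpretation of the trace and the fact that a closed walk of odd length contains an odd cycle of at most that length, then conclude by linearity. Your write-up is if anything slightly more careful, since you justify the closed-walk-to-odd-cycle step that the paper takes for granted.
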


\begin{proof}
    As $G$ does not contain odd cycles of length less than $k$, $G$ cannot contain odd walks of length less than $k$. Therefore, for any positive odd integer $j\leq k - 2$, we have
    \begin{equation*}
        \sum_{i = 1}^n\lambda_i^j = \Tr(A(G)^j) = 0. 
    \end{equation*}
    The desired claim follows by taking an appropriate linear combination. 
\end{proof}

To prove Theorem~\ref{k^-1}, Abiad, Taranchuk, and Van Veluw~\cite{abiad2025} considered the trace identity
\begin{equation*}
    \sum_{i = 1}^n\lambda_i^{k - 2} = \Tr(A(G)^{k - 2}) = 0, 
\end{equation*}
corresponding to the monomial $p(x) = x^{k - 2}$ in Lemma~\ref{odd poly}. For any choice of an odd polynomial $p$ in Lemma~\ref{odd poly}, Equation~(\ref{trace}) is homogeneous. Hence, for any set of real numbers $\lambda_1\geq \dots\geq \lambda_n$ satisfying the statement of Lemma~\ref{odd poly}, and any $r > 0$, $r\lambda_1\geq \dots\geq r\lambda_n$ also satisfies the statement of Lemma~\ref{odd poly}. Therefore, Lemma~\ref{odd poly} is insufficient in itself to impose upper bounds on $\gamma_k$. To resolve this issue, Abiad, Taranchuk, and Van Veluw~\cite{abiad2025} considered, in addition, the inhomogeneous inequality
\begin{equation*}
    \sum_{i = 1}^n\lambda_i^2 = \Tr(A(G)^2) = 2e(G)\leq n\lambda_1
\end{equation*}
to obtain the upper bound $\gamma_k = O(k^{-1})$. 

Our proof of Proposition~\ref{broad spectrum} adapts the method of Janzer and Yip \cite{janzer2025}, applying Lemma~\ref{odd poly} with a suitable Chebyshev polynomial instead of a monomial. 

We recall well-known properties of Chebyshev polynomials of the first kind. 

\begin{lem}
    For any positive odd integer $j$, the Chebyshev polynomial $T_j$ of the first kind of degree $j$ satisfies the following properties: 
    \begin{itemize}
        \item $T_j$ is an odd polynomial, 
        \item $T_j(x)\geq -1$ for any $x\geq -1$, 
        \item $T_j(x) = \frac{1}{2}\left(\left(x + \sqrt{x^2 - 1}\right)^j + \left(x - \sqrt{x^2 - 1}\right)^j\right)$ for any $x\geq 1$. 
    \end{itemize}
\end{lem}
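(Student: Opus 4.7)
The plan is to derive all three properties from the standard trigonometric characterisation $T_j(\cos\theta) = \cos(j\theta)$ together with the defining recurrence $T_0(x) = 1$, $T_1(x) = x$, $T_{j+1}(x) = 2x T_j(x) - T_{j-1}(x)$, which jointly determine $T_j$ uniquely as a polynomial of degree $j$.

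For oddness, I would compute $T_j(-\cos\theta) = T_j(\cos(\pi - \theta)) = \cos(j(\pi - \theta)) = (-1)^j \cos(j\theta) = -T_j(\cos\theta)$, using that $j$ is odd. This gives $T_j(-x) = -T_j(x)$ on the infinite set $x \in [-1,1]$, so by the polynomial identity principle the same relation holds on all of $\mathbb{R}$, meaning $T_j$ contains only odd-degree monomials.

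For the lower bound $T_j(x) \geq -1$ on $[-1,\infty)$, I would split into two cases. On $[-1,1]$ the estimate is immediate from $|\cos(j\theta)| \leq 1$. For $x \geq 1$, I would substitute $x = \cosh u$ with $u \geq 0$ and use the recurrence (or induction on $j$) to show $T_j(\cosh u) = \cosh(ju) \geq 1$, which in fact gives $T_j(x) \geq 1 > -1$ throughout $[1,\infty)$.

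For the closed form on $[1,\infty)$, I would treat $T_{j+1}(x) - 2x T_j(x) + T_{j-1}(x) = 0$ as a linear recurrence in $j$ whose characteristic polynomial $t^2 - 2xt + 1$ has roots $t_\pm = x \pm \sqrt{x^2 - 1}$, both real and positive for $x \geq 1$. The general solution $T_j(x) = A\, t_+^j + B\, t_-^j$, combined with the initial conditions $T_0(x) = 1$ and $T_1(x) = x$ and the identity $t_+ + t_- = 2x$, forces $A = B = 1/2$ and yields the stated formula. None of these steps poses a genuine obstacle, since the lemma merely collects well-known Chebyshev facts; the only care required is the extension of the oddness identity from $[-1,1]$ to all of $\mathbb{R}$ via the polynomial identity principle, and the observation that $t_+ t_- = 1$ so $t_-$ is automatically real and positive whenever $t_+$ is.
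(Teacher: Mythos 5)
Your proof is correct: the oddness via $\cos(j(\pi-\theta)) = -\cos(j\theta)$ for odd $j$ extended by the polynomial identity principle, the case split $[-1,1]$ versus $[1,\infty)$ for the lower bound, and the solution of the three-term recurrence with roots $x \pm \sqrt{x^2-1}$ are all standard and sound. The paper states these facts without proof as well-known properties of Chebyshev polynomials, so there is no authorial argument to compare against; your derivation fills that gap correctly.
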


\begin{proof}[Proof of Proposition~\ref{broad spectrum}]
    As the desired bound is trivial if $G$ is the empty graph, we henceforth assume that this is not the case, so $\lambda_1 \geq |\lambda_n| > 0$. Applying Lemma~\ref{odd poly} with the odd polynomial $p(x) = x^2T_{k - 4}\left(\frac{x}{|\lambda_n|}\right)$, we have
    \begin{equation*}
        \lambda_1^2 T_{k-4}\left(\frac{\lambda_1}{|\lambda_n|}\right) = -\sum_{i = 2}^n \lambda_i^2 T_{k-4}\left(\frac{\lambda_i}{|\lambda_n|}\right)\leq \sum_{i = 2}^n\lambda_i^2 < \sum_{i = 1}^n\lambda_i^2 = \Tr(A(G)^2) = 2e(G)\leq n\lambda_1. 
    \end{equation*}
    Therefore, letting $r = \frac{\lambda_1}{|\lambda_n|} \geq 1$, we have
    \begin{equation*}
        \frac{n}{\lambda_1} > T_{k-4}(r) > \frac{1}{2}\left(r + \sqrt{r^2 - 1}\right)^{k - 4}\geq \frac{1}{2}\left(1 + \sqrt{2(r - 1)}\right)^{k - 4}.
    \end{equation*}
    As $\frac{n}{\lambda_1}\leq k^3\leq 2^{k - 5}$ for $k\geq 100$, we have $\sqrt{2(r - 1)}\leq 1$. As $e^{t/2}\leq 1 + t$ for $0\leq t\leq 1$, we have
    \begin{equation*}
        \frac{2n}{\lambda_1} > \exp\left(\frac{k - 4}{2}\sqrt{2(r - 1)}\right) > \exp\left(\frac{k}{2}\sqrt{r - 1}\right). 
    \end{equation*}
    Therefore, 
    \begin{equation*}
        r - 1 < \frac{4}{k^2}\left(\log\left(\frac{2n}{\lambda_1}\right)\right)^2. 
    \end{equation*}
    Hence, we have
    \begin{equation*}
        \frac{\lambda_1 + \lambda_n}{n} = \frac{|\lambda_n|}{n}(r - 1) \leq \frac{\lambda_1}{n}(r - 1) < \frac{4}{k^2}\cdot\frac{\lambda_1}{n}\left(\log\left(\frac{2n}{\lambda_1}\right)\right)^2. \qedhere
    \end{equation*}
\end{proof}

The salient properties of the Chebyshev polynomial $T_j$ in the proof above are its uniform boundedness on the interval $[-1, 1]$, and its rapid growth outside of this interval. A classical result due to Bernstein (\cite{bernstein1912}, Corollary 7) shows that the Chebyshev polynomial is in fact optimal for this purpose. 

To improve the upper bound on $\frac{\lambda_1 + \lambda_n}{n}$ in the case where $\frac{\lambda_1}{n}$ is large, we therefore turn to a choice of an odd polynomial $p$ which depends on the exact eigenvalues $\lambda_1, \dots, \lambda_n$ at hand. 

\begin{proof}[Proof of Proposition~\ref{high lambda_1}]
    As in the proof of Proposition~\ref{broad spectrum}, we may assume that $\lambda_1\geq |\lambda_n| > 0$. Let $\mu = \lambda_1/2$. Let $\lambda_1\geq\dots\geq \lambda_{d^+}\geq \mu > \lambda_{d^+ + 1}\geq \dots \geq \lambda_{n - d^-} > -\mu \geq \lambda_{n - d^- + 1}\geq \dots \geq \lambda_n$. Note that $d^+\geq 1, d^-\geq 0$. Let $d = d^+ + d^-$. As
    \begin{equation*}
        n\lambda_1\geq 2e(G) = \sum_{i = 1}^n \lambda_i^2\geq d\mu^2 = \frac{d}{4}\lambda_1^2, 
    \end{equation*}
    we have
    \begin{equation*}
        d\leq \frac{4n}{\lambda_1}\leq k/4. 
    \end{equation*}

    Let us apply Lemma~\ref{odd poly} with the odd polynomial $p(x) = x^{k - 4d^- - 2}\prod_{i = n - d^- + 1}^n(x^2 - \lambda_i^2)^2$. Note that $k - 4d^- - 2\geq k - 4d + 2\geq 2$. Note that $p(x)\geq 0$ for any $x\geq 0$. For $x\in [-\mu, \mu]$, we have
    \begin{equation*}
        |p(x)|\leq x^2\mu^{k - 4d^- - 4}\prod_{i = n - d^- + 1}^n|x^2 - \lambda_i^2|^2\leq x^2\mu^{k - 4d^- - 4}(\lambda_1^4)^{d^-} = x^2\lambda_1^{k - 4}2^{-k + 4d^- + 4}. 
    \end{equation*}
    Therefore, as $p(\lambda_i) = 0$ for $i\geq n - d^- + 1$, we have
    \begin{equation*}
        p(\lambda_1)\leq \sum_{i = 1}^{d^+}p(\lambda_i) = \left|\sum_{i = d^+ + 1}^{n - d^-}p(\lambda_i)\right|\leq \lambda_1^{k - 4}2^{-k + 4d^- + 4}\sum_{i = d^+ + 1}^{n - d^-}\lambda_i^2\leq n\lambda_1^{k - 3}2^{-k + 4d^- + 4}. 
    \end{equation*}
    On the other hand, we have
    \begin{equation*}
        p(\lambda_1) = \lambda_1^{k - 4d^- - 2}\prod_{i = n - d^- + 1}^n(\lambda_1^2 - \lambda_i^2)^2\geq \lambda_1^{k - 4d^- - 2}(\lambda_1^2 - \lambda_n^2)^{2d^-}. 
    \end{equation*}
    Combining the two preceding inequalities, we have
    \begin{equation*}
        \lambda_1^{k - 4d^- - 2}(\lambda_1^2 - \lambda_n^2)^{2d^-}\leq p(\lambda_1)\leq n\lambda_1^{k - 3}2^{-k + 4d^- + 4}. 
    \end{equation*}
    Dividing both sides by $\lambda_1^{k - 2}2^{4d^-}$, we have
    \begin{equation*}
        \left(\frac{1 - \left(\lambda_n/\lambda_1\right)^2}{4}\right)^{2d^-}\leq \frac{16n}{\lambda_1}2^{-k}. 
    \end{equation*}
    Therefore, noting that $k\geq 100$, we have
    \begin{equation*}
        \left(\frac{1 - |\lambda_n|/\lambda_1}{4}\right)^{2d^-}\leq \left(\frac{1 - \left(\lambda_n/\lambda_1\right)^2}{4}\right)^{2d^-}\leq \frac{16n}{\lambda_1}2^{-k}\leq k2^{-k}\leq 2^{-k/2}. 
    \end{equation*}
    Hence
    \begin{equation*}
        \frac{\lambda_1 + \lambda_n}{n}\leq \frac{4\lambda_1}{n}2^{-\frac{k}{4d^-}}\leq \frac{4\lambda_1}{n}2^{-\frac{k\lambda_1}{16n}}\leq 4\cdot 2^{-\frac{k\lambda_1}{16n}}. \qedhere
    \end{equation*}
\end{proof}

\begin{proof}[Proof of Theorem~\ref{main}]
    Since the conclusion is asymptotic as $k\rightarrow\infty$, we may assume that $k\geq 100$. Let $G$ be a graph on $n$ vertices with odd girth $g_{\text{odd}}\geq k$. We shall show that $\frac{\lambda_1 + \lambda_n}{n}\leq 6400k^{-3}(\log k)^3$. 
    
    If $\lambda_1\leq n/k^3$, then $\frac{\lambda_1 + \lambda_n}{n}\leq k^{-3}$, as needed. 

    If $n/k^3\leq \lambda_1\leq 100\frac{\log k}{k}n$, then by Proposition~\ref{broad spectrum}, as $k\geq 100$, we have
    \begin{equation*}
        \frac{\lambda_1 + \lambda_n}{n}\leq \frac{4}{k^2}\cdot\frac{\lambda_1}{n}\left(\log\left(\frac{2n}{\lambda_1}\right)\right)^2\leq \frac{400\log k}{k^3}(\log (2k^3))^2\leq \frac{400\log k}{k^3}(\log (k^4))^2 = \frac{6400(\log k)^3}{k^3}. 
    \end{equation*}

    And lastly, if $\lambda_1\geq 100\frac{\log k}{k}n$, then by Proposition~\ref{high lambda_1}, as $k\geq 100$, we have
    \begin{equation*}
        \frac{\lambda_1 + \lambda_n}{n}\leq 4\cdot 2^{-\frac{k\lambda_1}{16n}}\leq 4\cdot 2^{-\frac{100\log k}{16}}\leq 4k^{-4.332}\leq 4k^{-3}. \qedhere
    \end{equation*}
\end{proof}

\section{Concluding Remarks}

Throughout our analysis in Section~\ref{proof section}, the only properties we used about $\lambda_1\geq \dots\geq\lambda_n$ are the trace identities
\begin{equation} \label{conclusion odd trace}
    \sum_{i = 1}^n\lambda_i^j = 0,
\end{equation}
for odd positive integers $j\leq k - 2$, and 
\begin{equation} \label{conclusion 2 trace}
    \sum_{i = 1}^n\lambda_i^2 \leq n\lambda_1. 
\end{equation}
It is natural, therefore, to consider the supremum $\gamma_k'$ of $\frac{\lambda_1 + \lambda_n}{n}$ over positive integers $n$ and \emph{real numbers} $\lambda_1\geq \dots\geq\lambda_n$ satisfying (\ref{conclusion odd trace}) and (\ref{conclusion 2 trace}). It follows that $\gamma_k\leq \gamma_k'$. Our proof for Theorem~\ref{main} in fact gives $\gamma_k' = O(k^{-3}(\log k)^3)$. 

\begin{thm}
    $\Omega(k^{-3})\leq \gamma_k\leq \gamma_k' \leq O(k^{-3}(\log k)^3)$. 
\end{thm}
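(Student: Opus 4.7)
The plan is to establish the three inequalities separately. The first, $\gamma_k \geq \Omega(k^{-3})$, has already been recorded in the introduction via the odd cycle $C_k$, which gives $\frac{\lambda_1 + \lambda_n}{n} = \frac{2}{k}\left(1 - \cos(\pi/k)\right) = \Theta(k^{-3})$. The second, $\gamma_k \leq \gamma_k'$, is immediate from the definitions: for any graph $G$ with $g_{\text{odd}} \geq k$ on $n$ vertices, its adjacency eigenvalues $\lambda_1 \geq \dots \geq \lambda_n$ satisfy (\ref{conclusion odd trace}) by Lemma~\ref{odd poly} applied to each odd monomial $x^j$ with $j \leq k - 2$, and they satisfy (\ref{conclusion 2 trace}) because $\sum_i \lambda_i^2 = 2e(G) \leq n\lambda_1$. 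Hence the supremum defining $\gamma_k'$ is taken over a set that contains every tuple arising from such a graph.

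The substantive new content is the third inequality, $\gamma_k' \leq O(k^{-3}\log^3 k)$. To prove this I would revisit the proofs of Propositions~\ref{broad spectrum} and~\ref{high lambda_1} in Section~\ref{proof section} and check line by line that they use only the two trace conditions (\ref{conclusion odd trace}) and (\ref{conclusion 2 trace}), so that they extend verbatim to an arbitrary ordered real tuple $\lambda_1 \geq \dots \geq \lambda_n$ satisfying them. The only graph-theoretic ingredients are, first, Lemma~\ref{odd poly}, which is applied exactly once in each proof with a specific odd polynomial of degree at most $k - 2$, and whose conclusion follows from (\ref{conclusion odd trace}) purely by linearity (any such polynomial is a real linear combination of the odd monomials $x^j$ with $j \leq k - 2$); and second, the inequality $\sum_i \lambda_i^2 = 2e(G) \leq n\lambda_1$, which is precisely (\ref{conclusion 2 trace}). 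All other steps are analytic manipulations of Chebyshev polynomials, of the partition $\{\lambda_i \geq \mu\}, \{|\lambda_i| < \mu\}, \{\lambda_i \leq -\mu\}$, and of the odd polynomial $p(x) = x^{k - 4d^- - 2} \prod_{i = n - d^- + 1}^n (x^2 - \lambda_i^2)^2$, all of which depend only on the ordering of the $\lambda_i$. Hence the two propositions, and their combination in the proof of Theorem~\ref{main}, carry over to $\gamma_k'$ unchanged.

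The only point that requires a brief sanity check is the initial reduction to $\lambda_1 \geq |\lambda_n| > 0$ that begins each of the two proofs. In the abstract setting this is equally painless: (\ref{conclusion odd trace}) applied with $j = 1$ gives $\sum_i \lambda_i = 0$, so either the tuple is identically zero (and the bound is trivial) or $\lambda_1 > 0$ and $\lambda_n < 0$; and we may further assume $\lambda_1 + \lambda_n > 0$, as otherwise the bound is again trivial, and this forces $\lambda_1 > |\lambda_n|$. The main obstacle is therefore not a mathematical one but simply the bookkeeping of verifying that no step of Section~\ref{proof section} secretly invokes the combinatorial origin of the $\lambda_i$; the observations above confirm that none do.
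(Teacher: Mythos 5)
Your proposal is correct and follows essentially the same route as the paper: the lower bound is the $C_k$ computation from the introduction, $\gamma_k\leq\gamma_k'$ is definitional via Lemma~\ref{odd poly}, and the upper bound is the observation that Propositions~\ref{broad spectrum} and~\ref{high lambda_1} use only (\ref{conclusion odd trace}) and (\ref{conclusion 2 trace}). Your extra care in rederiving $\lambda_1\geq|\lambda_n|>0$ in the abstract setting (where Perron--Frobenius is unavailable) is a worthwhile detail the paper leaves implicit.
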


In addition to the asymptotic behaviour of $\gamma_k$, it may be of interest to study the asymptotic behaviour of $\gamma_k'$, which represents the optimal upper bound on $\gamma_k$ via (\ref{conclusion odd trace}) and (\ref{conclusion 2 trace}). 

\begin{ques}
    What is the asymptotic behaviour of $\gamma_k'$ as $k\rightarrow\infty$?
\end{ques}
\begin{ques}
    Does the asymptotic equality $\gamma_k = \Theta(\gamma_k')$ hold as $k\rightarrow\infty$? 
\end{ques}

As a first step towards understanding $\gamma_k'$, we finish by finding the exact value of $\gamma_5'$. 

\begin{thm} \label{gamma_5'}
    $\gamma_5' = \frac{1 - 14^{-1/3}}{1 + 14^{1/3}}$. 
\end{thm}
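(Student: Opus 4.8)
I want to determine $\gamma_5'$, the supremum of $\frac{\lambda_1 + \lambda_n}{n}$ over all $n$ and all reals $\lambda_1 \geq \dots \geq \lambda_n$ satisfying $\sum_i \lambda_i = 0$, $\sum_i \lambda_i^3 = 0$ (the only odd $j \leq k-2 = 3$), and $\sum_i \lambda_i^2 \leq n\lambda_1$. By homogeneity I may normalize $\lambda_1 = 1$; write $a = -\lambda_n \geq 0$ (we may assume $a \leq 1$, and in fact if $\lambda_n \geq 0$ the graph-like constraints force everything to be $0$, giving ratio $0$, so assume $a > 0$). Then I want to maximize $\frac{1-a}{n}$, i.e.\ for fixed $n$ push $a$ as small as possible while keeping the constraints satisfiable, then optimize over $n$.

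**The plan.** The constraints $\sum \lambda_i = 0$ and $\sum \lambda_i^3 = 0$ with $\lambda_1 = 1$, $\lambda_n = -a$ fixed leave the remaining $n-2$ values free in $[-a, 1]$. The inequality $\sum \lambda_i^2 \leq n\lambda_1 = n$ is the binding resource. So the real question: given that $\lambda_1 = 1$ and $\lambda_n = -a$ are used, what is the minimum possible value of $\sum_{i=1}^n \lambda_i^2$ subject to $\sum \lambda_i = 0$, $\sum \lambda_i^3 = 0$? If that minimum is $\leq n$, the configuration is feasible. The first moment $\sum_{i=2}^{n-1}\lambda_i = a - 1$ and third moment $\sum_{i=2}^{n-1}\lambda_i^3 = a^3 - 1$ are prescribed; I want to minimize $1 + a^2 + \sum_{i=2}^{n-1}\lambda_i^2$. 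This is a moment problem: minimize the second moment of a measure on $[-a,1]$ (supported on $n-2$ atoms, but the atom constraint is slack for large $n$) with prescribed first and third moments. By a standard convexity/Lagrange argument the optimal configuration uses at most a few distinct values — the minimizer of $\sum \lambda_i^2$ given linear constraints on $\sum \lambda_i$ and $\sum \lambda_i^3$ will, by considering the Lagrangian $\lambda_i^2 - \alpha\lambda_i - \beta\lambda_i^3$, have each $\lambda_i$ at a critical point or endpoint, so at most two interior values plus endpoints. I expect the extremal example to be: one eigenvalue at $1$, some number at $-a$, and the rest at a single value $t$ (or $0$). Indeed the natural guess, matching the claimed answer, is $\lambda_1 = 1$, $\lambda_n = -a$ with multiplicity $m$, and $n - 1 - m$ further eigenvalues equal to some $t \geq 0$ — or perhaps simply $t = 0$. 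I would first try the ansatz with a block at $-a$ (multiplicity $m$) and a block at $0$ (multiplicity $n-1-m$): then $\sum\lambda_i = 1 - ma = 0$ forces $a = 1/m$, $\sum\lambda_i^3 = 1 - ma^3 = 1 - m^{-2} \neq 0$ — fails. So I need a third block. Try $\lambda_1 = 1$, multiplicity-$p$ block at $-a$, multiplicity-$q$ block at $b \in (0,1)$, rest at $0$: solve $1 - pa + qb = 0$, $1 - pa^3 + qb^3 = 0$, and minimize (or rather, check feasibility of) $\frac{1 + pa^2 + qb^2}{n}$ against $\leq 1$, then maximize $\frac{1-a}{n}$. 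The appearance of $14^{1/3}$ strongly suggests the final optimization yields a cubic whose solution involves $14^{1/3}$; I would guess the optimum has $p = q = 1$ and small $n$, or a clean relation like $b = a$ · (something).

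**Carrying it out.** Concretely: for the upper bound, take an arbitrary feasible sequence, and show $\frac{1-a}{n} \leq \frac{1 - 14^{-1/3}}{1 + 14^{1/3}}$ by the moment/convexity reduction — reduce to the finitely-parametrized extremal family, then solve the resulting optimization (Lagrange multipliers or direct calculus in the 2–3 remaining parameters). For the lower bound (matching construction), exhibit the explicit extremal sequence: I anticipate something like $\lambda_1 = 1$, with $\lambda = -a$ and $\lambda = b$ blocks and zeros, where plugging in the optimal parameters makes $\sum\lambda_i^2 = n$ exactly (the constraint \eqref{conclusion 2 trace} tight) and $\frac{1-a}{n}$ equal to the claimed value; then verify \eqref{conclusion odd trace}, \eqref{conclusion 2 trace} hold. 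To guess-and-check: set $t = 14^{1/3}$, so the target is $\frac{t-1}{t(t+1)} \cdot$ hmm, let me instead write the target as $\frac{1 - t^{-1}}{1 + t}$ with $t = 14^{1/3}$, i.e.\ $\frac{t-1}{t(1+t)} = \frac{t-1}{t+t^2}$. Setting $n$, $a$, $b$, and multiplicities so that this emerges will pin down the example.

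**Main obstacle.** The genuinely delicate part is the upper bound: rigorously justifying that the supremum over all $(n; \lambda_1,\dots,\lambda_n)$ is attained (or approached) by the low-complexity extremal family, and handling the atom-count constraint (each "block" must have integer multiplicity, and $n$ ranges over all positive integers — but since we divide by $n$ and can scale multiplicities, the continuous relaxation should be exact in the limit, with the supremum possibly not attained at finite $n$, or attained at a specific small $n$). I would handle this via a clean convexity argument: fix $n$, fix $\lambda_1 = 1$ and $\lambda_n = -a$; among all ways to choose $\lambda_2, \dots, \lambda_{n-1} \in [-a,1]$ with the two moment constraints, the one minimizing $\sum \lambda_i^2$ is extremal in the polytope-slice sense, hence supported on $O(1)$ values; then it's a finite calculation. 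The second obstacle is purely computational: the final optimization is a constrained extremum problem in 2–3 variables whose stationarity conditions give a cubic — showing the relevant root is exactly $14^{1/3}$ and that it yields $\frac{1-14^{-1/3}}{1+14^{1/3}}$ requires care but no new ideas. I would also need to confirm the $\gamma_5 = O(k^{-1})$-style observation that restricting to $a \leq 1$ and $a > 0$ loses nothing, and that the sign pattern (only one eigenvalue equal to $\lambda_1$, only the $-a$ block negative) is WLOG at the optimum — again a convexity/exchange argument: moving mass toward $0$ from any interior value decreases $\sum\lambda_i^2$ while we can compensate the odd moments, so the optimal interior support is as concentrated as the constraints permit.
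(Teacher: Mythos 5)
Your setup contains a fatal error before the main argument begins: the normalization $\lambda_1=1$ is not without loss of generality, because constraint~(\ref{conclusion 2 trace}) is inhomogeneous. Once you set $\lambda_1=1$ (and $|\lambda_n|\leq\lambda_1$), you get $\sum_i\lambda_i^2\leq n\max_i\lambda_i^2=n=n\lambda_1$ automatically, so the ``binding resource'' you intend to exploit is vacuous, and your objective $\frac{1-a}{n}$ with $n$ equal to the number of entries is the wrong functional. The scale-invariant formulation is to maximize $\frac{1+\mu_n}{\sum_i\mu_i^2}$ over normalized sequences $\mu_i=\lambda_i/\lambda_1$: when (\ref{conclusion 2 trace}) is tight the effective denominator is $n/\lambda_1=\sum_i\mu_i^2$, which in the extremal example equals $1+14^{1/3}\approx 3.41$ while the number of nonzero entries is unbounded (in the optimizer, $\lambda_1$ is a constant fraction of $n$, roughly $0.29n$). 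Optimizing $\frac{1-a}{n}$ with $n$ the entry count gives a strictly smaller supremum (a short computation along your own lines yields roughly $0.06$, versus the true value $\approx 0.17$).

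Even with the corrected objective, the two steps you defer as ``computational'' hide the actual content, and your guesses about them are wrong. First, the extremal configuration is not ``$p=q=1$ and small $n$'': it is one eigenvalue at $14^{1/3}|\lambda_n|$, \emph{fourteen} at $\lambda_n$, and the positive mass needed to kill the first moment spread over arbitrarily many arbitrarily small eigenvalues (contributing to $\sum\lambda_i$ but not to $\sum\lambda_i^2$ or $\sum\lambda_i^3$ in the limit), so the supremum is approached only as $n\rightarrow\infty$. Second, $14$ does not arise as the root of a stationarity cubic. The continuous relaxation of your Lagrange/exchange argument — treating the multiplicity of the $\lambda_n$-block as a real parameter — reproduces exactly Csikv\'ari's bound $3-2\sqrt2$, with optimum at ``$7+5\sqrt2\approx 14.07$ copies of $\lambda_n$''. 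The entire improvement to $\frac{1-14^{-1/3}}{1+14^{1/3}}$ comes from the integrality of that multiplicity: the paper encodes this via the inequality $\sum_{\lambda_i<0}|\lambda_i|^3\leq|\lambda_n|^3 f(s)$ with $s=\sum_{\lambda_i<0}\lambda_i^2/\lambda_n^2$ and $f(s)=\lfloor s\rfloor+\{s\}^{3/2}$ (Lemma~\ref{start}), and then performs a perturbative integer optimization showing the maximum of $\frac{1-f(s)^{-1/3}}{1+sf(s)^{-2/3}}$ sits at $s=14$. Your ansatz with integer block multiplicities could in principle capture this, but you have not identified it as the crux, and the ``standard convexity/Lagrange'' reduction you describe would erase it. The lower-bound construction and the careful interval analysis near $s=14$ also still need to be supplied.
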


As $\gamma_5\leq \gamma_5'$, Theorem~\ref{gamma_5'} yields a very small improvement on Csikv\'{a}ri's~\cite{csikvari2022} upper bound $\gamma_5\leq 3 - 2\sqrt{2}$ for triangle-free graphs. 

\begin{cor}
    For any triangle-free graph $G$ on $n$ vertices, we have $\frac{\lambda_1 + \lambda_n}{n}\leq \frac{1 - 14^{-1/3}}{1 + 14^{1/3}}$. 
\end{cor}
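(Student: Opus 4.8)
The plan is to establish the matching bounds $\gamma_5' \ge \frac{1-14^{-1/3}}{1+14^{1/3}}$ and $\gamma_5' \le \frac{1-14^{-1/3}}{1+14^{1/3}}$ separately; write $\rho^{\ast} = \frac{1-14^{-1/3}}{1+14^{1/3}}$ and note the identity $\rho^{\ast} = \frac{c_0(1-c_0)}{1+c_0}$ where $c_0 := 14^{-1/3}$, as well as the fact that $h(c) := \frac{c(1-c)}{1+c}$ is unimodal on $(0,1)$ with maximum value $3-2\sqrt 2$ attained at $c^{\ast} := \sqrt 2 - 1$. First I would record a reduction. For $k = 5$ the conditions (\ref{conclusion odd trace}), (\ref{conclusion 2 trace}) read $\sum_i\lambda_i = \sum_i\lambda_i^3 = 0$ and $\sum_i\lambda_i^2 \le n\lambda_1$. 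Given \emph{any} finite real sequence $\lambda_1 \ge \dots \ge \lambda_n$ with $\sum_i\lambda_i = \sum_i\lambda_i^3 = 0$ and $\lambda_1 > 0$ (the case $\lambda_1 \le 0$ forces all $\lambda_i = 0$), scaling by the factor $n\lambda_1 / \sum_i\lambda_i^2$ produces a sequence satisfying (\ref{conclusion 2 trace}) with equality which, since $\frac{\lambda_1+\lambda_n}{n}$ scales linearly while $\frac{\lambda_1(\lambda_1+\lambda_n)}{\sum_i\lambda_i^2}$ is scale-invariant, realises $\frac{\lambda_1+\lambda_n}{n} = \frac{\lambda_1(\lambda_1+\lambda_n)}{\sum_i\lambda_i^2}$; conversely any sequence satisfying (\ref{conclusion 2 trace}) has $\frac{\lambda_1+\lambda_n}{n} \le \frac{\lambda_1(\lambda_1+\lambda_n)}{\sum_i\lambda_i^2}$. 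Hence $\gamma_5'$ is the supremum of $\frac{\lambda_1(\lambda_1+\lambda_n)}{\sum_i\lambda_i^2}$ over all sequences with $\sum_i\lambda_i = \sum_i\lambda_i^3 = 0$, which after normalising $\lambda_1 = 1$ and writing $c = -\lambda_n$ becomes $\sup \frac{1-c}{\sum_i\lambda_i^2}$ over sequences $1 = \lambda_1 \ge \dots \ge \lambda_n = -c$ with $\sum_i\lambda_i = \sum_i\lambda_i^3 = 0$.

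For the lower bound I would use three-point spectra $\{1, s, -c\}$ with multiplicities $(1, m, 14)$. The two trace identities force $14c = 1 + ms$ and $14c^3 = 1 + ms^3$, whence $(1+ms)^3 = 196(1+ms^3)$; taking $m$ through a suitable sequence of integers tending to $\infty$ yields solutions with $s \to 0^{+}$ and $c \to c_0$. Along this sequence $\sum_i\lambda_i^2 = 1 + ms^2 + 14c^2 \to 1 + 14c_0^2 = 1 + 14^{1/3}$ while $1 - c \to 1 - c_0$, so $\frac{1-c}{\sum_i\lambda_i^2} \to \frac{1-c_0}{1+14^{1/3}} = \rho^{\ast}$, giving $\gamma_5' \ge \rho^{\ast}$.

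For the upper bound I would begin with the elementary estimate $\sum_i\lambda_i^2 \ge 1 + 1/c$. Indeed $\lambda^2 \ge \lambda^3$ for $\lambda \in [0,1]$ and $\lambda^2 \ge -\lambda^3/c$ for $\lambda \in [-c,0]$, so writing $P := \sum_{\lambda_i>0}\lambda_i^3 \ge \lambda_1^3 = 1$ and using $\sum_i\lambda_i^3 = 0$ gives $\sum_i\lambda_i^2 \ge P\,(1 + 1/c) \ge 1 + 1/c$, hence $\frac{1-c}{\sum_i\lambda_i^2} \le h(c)$. Since $h(c) \le \rho^{\ast}$ for every $c$ outside the interval $(c_1, c_0)$, where $c_1 < c^{\ast} < c_0$ denote the two roots of $h = \rho^{\ast}$, this already handles all such $c$ (and $c \ge 1$ is trivial). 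It remains to bound $\frac{1-c}{\sum_i\lambda_i^2}$ for $c \in (c_1, c_0)$, where $h(c)$ can exceed $\rho^{\ast}$ and so the proof must genuinely go beyond Csikv\'{a}ri's argument; this is where the integer $14$ enters. For fixed $c$ I would show that the minimum of $\sum_i\lambda_i^2$ over admissible sequences with $\lambda_n = -c$ is attained by a three-point spectrum $\{1, s, -c\}$ with multiplicities $(1, m, b)$, reducing a general configuration to one with at most three distinct values by a moving-eigenvalue (extreme-point) argument that preserves the first and third moments and does not increase $\sum_i\lambda_i^2$. For such a configuration the trace identities give $s^2 = \frac{bc^3-1}{bc-1}$ and $\sum_i\lambda_i^2 = \frac{(1+c)(1-s)}{c-s}$, and admissibility ($0 < s < c$ with $m$ a positive integer) forces $bc^3 > 1$, i.e. $b \ge \lceil c^{-3}\rceil$. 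Since $c^{-3} \in (14,15)$ for $c \in (c_1, c_0)$, one is forced to take $b \ge 15$, so $s = \sqrt{\tfrac{15c^3-1}{15c-1}}$ stays bounded below (by roughly $0.10$) on $(c_1,c_0)$, and then a direct computation shows $\frac{(1-c)(c-s)}{(1+c)(1-s)} < \rho^{\ast}$ there. Combining the two ranges gives $\gamma_5' \le \rho^{\ast}$, hence equality.

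The main obstacle is precisely this last step. The improvement of $\rho^{\ast}$ over $3-2\sqrt 2$ is minuscule and is dictated entirely by the fact that the optimal \emph{real} multiplicity of the extreme eigenvalue, namely $(c^{\ast})^{-3} = (\sqrt 2 + 1)^3 \approx 14.07$, is not an integer, so that an admissible spectrum is forced off the relaxed optimum. The crux is thus the reduction, for fixed $c$, of the $\sum_i\lambda_i^2$-minimiser to a three-point spectrum — an extreme-point argument for a moment problem, complicated both by the cubic constraint $\sum_i\lambda_i^3 = 0$ and by the need to keep the multiplicities integral — after which everything reduces to the discrete optimisation $\max_{b \in \mathbb{Z}_{\ge 1}} h\!\left(b^{-1/3}\right) = h\!\left(14^{-1/3}\right)$, valid because $14^{-1/3}$ is the point of the form $b^{-1/3}$ lying closest to the maximiser $c^{\ast}$ of the unimodal function $h$.
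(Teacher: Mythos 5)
Your overall framework is sound: for the corollary one only needs that a triangle-free graph has odd girth at least $5$, so its spectrum satisfies (\ref{conclusion odd trace}) for $j=1,3$ and (\ref{conclusion 2 trace}), and hence the upper bound $\gamma_5'\le\rho^*$ suffices. Your normalisation to $\sup\frac{1-c}{\sum_i\lambda_i^2}$, your elementary estimate $\sum_i\lambda_i^2\ge P(1+1/c)$ recovering Csikv\'ari's bound $h(c)\le 3-2\sqrt2$, and your lower-bound construction (which the corollary does not need) are all correct and close in spirit to the paper. But the step you yourself flag as the crux — the reduction, for $c\in(c_1,c_0)$, of the minimiser of $\sum_i\lambda_i^2$ to a three-point spectrum $\{1,s,-c\}$ with all negative eigenvalues equal to $-c$ — is not only unproven, it is false. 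For $c$ strictly inside that interval one has $c^{-3}\in(14,15)$, and the configurations minimising $\sum_i\lambda_i^2$ place $14$ eigenvalues at $-c$ together with \emph{one additional negative eigenvalue of smaller modulus} (a four-point spectrum): this gives negative-part square sum $c^2\bigl(14+(c^{-3}-14)^{2/3}\bigr)<15c^2$ and a smaller required positive cube sum, hence strictly smaller $\sum_i\lambda_i^2$ than any member of your $b\ge15$ family. Indeed your own computation shows the $b=15$ family only reaches ratios around $0.14$, far below $\rho^*\approx0.1716$, whereas the four-point configurations approach $\rho^*$ as $c\to c_0^-$; so the near-extremal configurations lie entirely outside the family your argument controls, and the claimed "direct computation" cannot close the bound.

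The paper avoids this by never reducing to a fixed spectral shape. It proves an extremal lemma (Lemma~\ref{start} and Corollary~\ref{start cor}): for $x_i\in[0,1]$ with $\sum x_i=s$, one has $\sum x_i^{3/2}\le\lfloor s\rfloor+\{s\}^{3/2}=:f(s)$, the maximiser being $\lfloor s\rfloor$ ones plus one partial entry. Applying this to $x_i=(\lambda_i/\lambda_n)^2$ over the negative eigenvalues yields $\lambda_1^3\le|\lambda_n|^3f(s)$ and $sf(s)^{-2/3}\le n/\lambda_1-1$, hence $\frac{\lambda_1+\lambda_n}{n}\le\frac{1-f(s)^{-1/3}}{1+sf(s)^{-2/3}}$, and a one-variable optimisation shows the right-hand side is maximised at $s=14$. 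In other words, the integrality that produces the constant $14$ enters through $\lfloor s\rfloor$ — the number of negative eigenvalues that can simultaneously equal $\lambda_n$ — together with the fractional correction $\{s\}^{3/2}$, not through forcing an integer multiplicity $b\ge\lceil c^{-3}\rceil$ of the value $-c$. To repair your argument you would need to enlarge your family to include the partial extreme eigenvalue, at which point you essentially rederive the function $f$.
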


More importantly, the tightness of Theorem~\ref{gamma_5'} shows that any further improvement to the upper bound on $\gamma_5$ must go beyond the trace identities (\ref{conclusion odd trace}) and (\ref{conclusion 2 trace}). Our proof of the upper bound in Theorem~\ref{gamma_5'} proceeds via a small modification of Csikv\'{a}ri's method in~\cite{csikvari2022}. As such, we defer the proof of Theorem~\ref{gamma_5'} to Appendix~\ref{gamma_5' appendix}. 

\textbf{Acknowledgements.} The author is grateful to Oliver Janzer for helpful comments and suggestions. The author would also like to thank the anonymous referees for their thoughtful and constructive comments. 

\bibliographystyle{abbrv}
\bibliography{mybib}

\appendix

\section{The value of $\gamma_5'$} \label{gamma_5' appendix}

We first prove the upper bound $\gamma_5'\leq \frac{1 - 14^{-1/3}}{1 + 14^{1/3}}$. 

\begin{lem} \label{start}
    Fix $\alpha > 1$, non-negative integers $\ell > m$ and $0\leq x < 1$. Let $s = m + x$. Consider the constrained optimisation problem (A) over $x_1, \dots, x_\ell\in [0, 1]$ where we seek to maximise 
    \begin{equation*}
        \sum_{i = 1}^\ell x_i^\alpha
    \end{equation*} given \begin{equation*}
        \sum_{i = 1}^\ell x_i = s. 
    \end{equation*}
    Up to permutation, the solutions to (A) are precisely $(x_1, \dots, x_\ell) = (1, 1, \dots, 1, x, 0, \dots, 0)$, where there are $m$ entries of 1. 
\end{lem}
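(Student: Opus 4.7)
The plan is to exploit the strict convexity of $f(t) = t^\alpha$ on $[0,1]$ (for $\alpha > 1$) via a standard push-to-boundary argument. Since the feasible set is the intersection of the hypercube $[0,1]^\ell$ with a hyperplane, and is nonempty because $s = m + x \le m + 1 \le \ell$ (using $\ell > m$), a maximizer exists by compactness.

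The key step is to show that in any maximizer $(x_1, \dots, x_\ell)$, at most one coordinate lies in the open interval $(0,1)$. Suppose for contradiction that $x_i, x_j \in (0,1)$ with $x_i \le x_j$. Let $\epsilon = \min(x_i,\, 1 - x_j) > 0$, and consider the perturbation $(x_i, x_j) \mapsto (x_i - \epsilon, x_j + \epsilon)$, which preserves both the sum constraint and the box constraints. Since $t \mapsto t^{\alpha - 1}$ is strictly increasing on $[0, \infty)$ for $\alpha > 1$, for all $\eta \in (0, \epsilon]$ we have
\[
    \frac{d}{d\eta}\bigl[(x_i - \eta)^\alpha + (x_j + \eta)^\alpha\bigr] = \alpha\bigl[(x_j + \eta)^{\alpha - 1} - (x_i - \eta)^{\alpha - 1}\bigr] > 0.
\]
Integrating from $0$ to $\epsilon$, the perturbation strictly increases $\sum_{i=1}^\ell x_i^\alpha$, contradicting optimality.

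Having reduced to configurations with at most one interior coordinate, write a maximizer as having $a$ entries equal to $1$, $c \in \{0,1\}$ entries equal to some $y \in (0,1)$, and $\ell - a - c$ entries equal to $0$. The sum constraint reads $a + cy = m + x$; comparing integer and fractional parts (using $0 \le x < 1$, and $y \in (0,1)$ when $c = 1$) forces either $c = 0,\ a = m,\ x = 0$, or $c = 1,\ a = m,\ y = x$. Both cases match the claimed solution $(1,\ldots,1,x,0,\ldots,0)$ with exactly $m$ ones, establishing uniqueness up to permutation.

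The main obstacle is negligible, as the proof is essentially a standard Schur/majorization argument. The only care needed is handling the edge case $x = 0$ in the integer-part matching (where the interior coordinate is simply absent) and confirming feasibility of the claimed configuration, which is precisely where the hypothesis $\ell > m$ is used.
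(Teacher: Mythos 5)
Your proposal is correct and follows essentially the same route as the paper: existence of a maximizer by compactness, then a perturbation argument exploiting the strict monotonicity of $t\mapsto t^{\alpha-1}$ to show at most one coordinate of a maximizer lies in $(0,1)$. The only difference is that you spell out the final integer/fractional-part accounting, which the paper leaves implicit.
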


\begin{proof}
    As we seek to maximise the value of a continuous function over a non-empty compact subset of $\mathbb{R}^\ell$, the constrained optimisation problem admits solutions. It suffices to show that any solution $(x_1, \dots, x_\ell)$ of (A) has at most one entry which is not zero or one. 
    
    For the sake of contradiction, let $(x_1, \dots, x_\ell)$ be a solution to (A) where $0 < x_1\leq x_2 < 1$. The function $t\mapsto (x_1 - t)^\alpha + (x_2 + t)^\alpha$ is smooth over $t\in [0, \min(x_1, 1 - x_2))$ with positive derivative for $t > 0$. Therefore, for some value of $t\in (0, \min(x_1, 1 - x_2))$, we have $(x_1 - t)^\alpha + (x_2 + t)^\alpha > x_1^\alpha + x_2^\alpha$. Considering the $\ell$-tuple $(x_1 - t, x_2 + t, x_3, x_4, \dots, x_\ell)$ contradicts the assumption that $(x_1, \dots, x_\ell)$ solves (A). 
\end{proof}

\begin{cor} \label{start cor}
    Fix $\alpha > 1$, non-negative integers $\ell, m$ and $0\leq x < 1$. Let $s = m + x$. If $x_1, \dots, x_\ell\in [0, 1]$ satisfy $\sum_{i = 1}^\ell x_i = s$, then
    \begin{equation*}
        \sum_{i = 1}^\ell x_i^\alpha\leq m + x^\alpha. 
    \end{equation*}
\end{cor}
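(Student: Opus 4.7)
The plan is to deduce this as an essentially direct consequence of Lemma~\ref{start}, with care for the boundary cases not covered by the hypothesis $\ell > m$ of that lemma. The only real work is bookkeeping: matching the maximum value supplied by Lemma~\ref{start} against the expression $m + x^\alpha$, and dealing with $\ell \leq m$ separately since Lemma~\ref{start} does not apply there.

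I would split into three cases according to how $\ell$ compares with $m$. First, if $\ell < m$, then any tuple $(x_1, \dots, x_\ell) \in [0,1]^\ell$ satisfies $\sum_{i} x_i \leq \ell < m \leq s$, so the constraint $\sum_i x_i = s$ is unsatisfiable and the conclusion holds vacuously. Second, if $\ell = m$, then the constraint forces $s = \sum_i x_i \leq m$; combined with $s = m + x \geq m$, this yields $x = 0$ and $x_i = 1$ for each $i$, so $\sum_{i} x_i^\alpha = m = m + x^\alpha$ (using $\alpha > 1$ so that $0^\alpha = 0$), giving equality.

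The main case is $\ell > m$, where Lemma~\ref{start} applies directly. Note that $s = m + x < m + 1 \leq \ell$, so the constraint set in (A) is non-empty; indeed the tuple $(\underbrace{1, \dots, 1}_{m}, x, 0, \dots, 0)$ is feasible. By Lemma~\ref{start}, the maximum of $\sum_i x_i^\alpha$ subject to $\sum_i x_i = s$ and $x_i \in [0,1]$ is attained precisely at this tuple (up to permutation), and its value equals $m \cdot 1^\alpha + x^\alpha + 0 = m + x^\alpha$. Hence $\sum_i x_i^\alpha \leq m + x^\alpha$ for any feasible tuple, as desired.

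There is no real obstacle here; the proof is a short case analysis, and the content of the result lies entirely in Lemma~\ref{start}. The corollary is just a convenient restatement that removes the hypothesis $\ell > m$ and drops the qualitative characterisation of the maximisers in favour of the numerical inequality.
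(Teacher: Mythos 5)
Your proof is correct and follows essentially the same route as the paper: reduce to the observation that the constraint forces $\ell \geq s \geq m$, handle $\ell = m$ by noting all $x_i$ must equal $1$ (and $x = 0$), and invoke Lemma~\ref{start} when $\ell > m$. The only cosmetic difference is that you treat the unsatisfiable case $\ell < m$ as a separate vacuous case rather than folding it into the inequality $\ell \geq m$.
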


\begin{proof}
    Since $\sum_{i = 1}^\ell x_i = s$ with $x_1, \dots, x_\ell\in [0, 1]$, we must have $\ell\geq s\geq m$. If $\ell = m$, we have $x_1 = \dots = x_\ell = 1$, and the desired inequality holds with equality. If $\ell > m$, the desired inequality follows from Lemma~\ref{start}. 
\end{proof}

Given $s\in \mathbb{R}$, let $f(s) = \lfloor s\rfloor + \{s\}^{3/2}$, where $\{s\}$ denotes the fractional part of $s$. 

\begin{lem}
    $\gamma_5' \leq \sup_{s\geq 1} \frac{1 - f(s)^{-1/3}}{1 + sf(s)^{-2/3}}$. 
\end{lem}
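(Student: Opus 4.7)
The plan is to exploit Corollary~\ref{start cor} with $\alpha = 3/2$ via a change of variables that converts the trace-$3$ vanishing condition into a lower bound on $\sum \lambda_i^2$, in the spirit of Csikv\'ari's argument for $\gamma_5 \leq 3 - 2\sqrt{2}$ but refined to take advantage of the discreteness encoded in $f$.

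The first step is to reduce to a normalized form. We may assume that $\lambda_1 \geq |\lambda_n| > 0$, since the sign reversal $\lambda_i \mapsto -\lambda_{n+1-i}$ preserves all three constraints and the absolute value of $\lambda_1+\lambda_n$, and the all-zero case is trivial. Write $t := |\lambda_n|/\lambda_1 \in (0, 1]$, let $a_i := \lambda_i/\lambda_1 \in (0, 1]$ range over the positive $\lambda_i$'s (so $a_1 = 1$), and let $b_j := |\lambda_i|/\lambda_1 \in (0, t]$ range over the negative $\lambda_i$'s (so $b_1 = t$). The three constraints then read $\sum a_i = \sum b_j$, $\sum a_i^3 = \sum b_j^3 \geq 1$ (the inequality from $a_1 = 1$), and $\sum a_i^2 + \sum b_j^2 \leq n/\lambda_1$. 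Setting $\beta_j := b_j/t \in (0,1]$, we obtain $\sum \beta_j^3 \geq 1/t^3$.

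The key step is the substitution $v_j := \beta_j^2 \in (0, 1]$, with $V := \sum v_j = \sum \beta_j^2$. Since $v_j^{3/2} = \beta_j^3$, Corollary~\ref{start cor} with $\alpha = 3/2$ applied to $(v_j)$ reads $\sum \beta_j^3 \leq f(V)$. Combining with $\sum \beta_j^3 \geq 1/t^3$ gives $f(V) \geq 1/t^3$, and since $f$ is continuous and strictly increasing on $[1,\infty)$ with $f(1) = 1$, we obtain $V \geq s := f^{-1}(1/t^3) \geq 1$.

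To conclude, $\sum b_j^2 = t^2 V \geq s t^2$ and $\sum a_i^2 \geq a_1^2 = 1$, so $\sum a_i^2 + \sum b_j^2 \geq 1 + s t^2$. Combining with $\sum a_i^2 + \sum b_j^2 \leq n/\lambda_1$ gives $n \geq \lambda_1(1 + st^2)$, and hence
\begin{equation*}
    \frac{\lambda_1 + \lambda_n}{n} = \frac{\lambda_1(1-t)}{n} \leq \frac{1-t}{1 + s t^2} = \frac{1 - f(s)^{-1/3}}{1 + s f(s)^{-2/3}},
\end{equation*}
where the last equality uses the fact that $t = f(s)^{-1/3}$ by construction. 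Since $s \geq 1$, the right-hand side is at most the supremum appearing in the statement, giving the desired bound. The only conceptually non-routine step is recognizing the substitution $v_j = \beta_j^2$, which lets Corollary~\ref{start cor} with $\alpha = 3/2$ convert the $3$-norm lower bound $\sum \beta_j^3 \geq 1/t^3$ into a $2$-norm lower bound on $\sum \beta_j^2$ via the identity $v_j^{3/2} = \beta_j^3$; all other steps are routine scaling and bookkeeping.
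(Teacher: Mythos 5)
Your argument is correct and follows essentially the same route as the paper: the same application of Corollary~\ref{start cor} with $\alpha = 3/2$ to the squared, normalised negative eigenvalues (your explicit substitution $v_j = \beta_j^2$ is exactly what the paper does implicitly), combined with the same three inequalities in the same way; your choice $s = f^{-1}(t^{-3})$ versus the paper's $s = \sum_{\lambda_i < 0}\lambda_i^2/\lambda_n^2$ is an immaterial reparametrisation. One cosmetic point: the sign-reversal map preserves only $|\lambda_1 + \lambda_n|$ and so does not by itself justify the reduction to $\lambda_1 \geq |\lambda_n|$ when bounding the signed quantity $\lambda_1 + \lambda_n$ from above, but the excluded case forces $\lambda_1 + \lambda_n < 0$, which is trivially below the (positive) supremum, so the reduction stands.
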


\begin{proof}
    Let $\lambda_1\geq \dots\geq\lambda_n$ be real numbers satisfying (\ref{conclusion odd trace}) (for $j = 1, 3$) and (\ref{conclusion 2 trace}). We would like to show that $\lambda_1 + \lambda_n\leq \frac{1 - f(s)^{-1/3}}{1 + sf(s)^{-2/3}} n$ for some $s\geq 1$. This inequality holds when $\lambda_1 = \dots = \lambda_n = 0$. Therefore, we assume that $\lambda_1 > 0 > \lambda_n$. Let $\lambda_1\geq \dots\geq \lambda_d \geq 0 > \lambda_{d + 1}\geq \dots\geq \lambda_n$. We have, 
    \begin{equation*}
        \sum_{i = d + 1}^n\lambda_i^2 \leq n\lambda_1 - \lambda_1^2. 
    \end{equation*}
    Take $s = \sum_{i = d + 1}^n\lambda_i^2/\lambda_n^2\geq 1$. Applying Corollary~\ref{start cor} to $(\lambda_{d + 1}^2/\lambda_n^2, \dots, \lambda_n^2/\lambda_n^2)$ and $\alpha = 3/2$, we have
    \begin{equation*}
        \sum_{i = d + 1}^n\lambda_i^3/\lambda_n^3\leq m + x^{3/2} = f(s), 
    \end{equation*}
    where $m = \lfloor s\rfloor$ and $x = \{s\}$. Therefore, 
    \begin{equation*}
        0 = \sum_{i = 1}^n\lambda_i^3 \geq \lambda_1^3 - \sum_{i = d + 1}^n|\lambda_i|^3\geq \lambda_1^3 - |\lambda_n|^3f(s). 
    \end{equation*}
    Therefore, we have
    \begin{align*}
        \frac{n\lambda_1 - \lambda_1^2}{|\lambda_n|^2}\geq s, \qquad\frac{\lambda_1^3}{|\lambda_n|^3}\leq f(s). 
    \end{align*}
    As a result, we have
    \begin{equation*}
        sf(s)^{-2/3}\leq \frac{n\lambda_1 - \lambda_1^2}{|\lambda_n|^2}\frac{|\lambda_n|^2}{\lambda_1^2} = \frac{n}{\lambda_1} - 1. 
    \end{equation*}
    Therefore, 
    \begin{equation*}
        \frac{\lambda_1 + \lambda_n}{n} = \frac{1 - |\lambda_n|/\lambda_1}{n/\lambda_1} \leq \frac{1 - f(s)^{-1/3}}{1 + sf(s)^{-2/3}}. \qedhere
    \end{equation*}
\end{proof}

Note that $f$ is an increasing function and $f(s)\leq s$, with equality if and only if $s$ in an integer. Replacing $f(s)$ by $s$ in the upper bound $\sup_{s\geq 1} \frac{1 - f(s)^{-1/3}}{1 + sf(s)^{-2/3}}$ in fact yields Csikv\'{a}ri's bound $3 - 2\sqrt{2}$. 

\begin{thm}
    $\gamma_5' \leq \frac{1 - 14^{-1/3}}{1 + 14^{1/3}}$. 
\end{thm}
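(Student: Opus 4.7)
The plan is to invoke the preceding lemma and then directly analyze the resulting supremum. Writing $C := \frac{1 - 14^{-1/3}}{1 + 14^{1/3}}$, it suffices to show that $h(s) := \frac{1 - f(s)^{-1/3}}{1 + s f(s)^{-2/3}} \leq C$ for every real $s \geq 1$. For $s \in [m, m+1]$ with $m \geq 1$ an integer, set $v = \sqrt{s - m} \in [0, 1]$ and $\tau = f(s)^{1/3}$, so that $\tau^3 = m + v^3$ and $s = m + v^2$. Clearing denominators in $h(s) \leq C$ and eliminating $m$ via $m = \tau^3 - v^3$ converts the bound into the equivalent inequality
\[
F(\tau, v) \;:=\; \tau\,Q(\tau) + Cv^2(1 - v) \;\geq\; 0, \qquad Q(\tau) \;:=\; C\tau^2 - (1 - C)\tau + 1.
\]
Since $C = H(14)$ for $H(y) := (1 - y^{-1/3})/(1 + y^{1/3})$, one checks directly that $\tau = 14^{1/3}$ is a root of $Q$, so by Vieta the two roots are $t_1 := 14^{1/3}$ and $t_2 := (1 + 14^{1/3})/(14^{1/3} - 1)$, and $Q(\tau) \geq 0$ whenever $\tau \notin (t_1, t_2)$.

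For $m \neq 14$, the argument is immediate: when $m \leq 13$, $\tau \leq (m + 1)^{1/3} \leq 14^{1/3} = t_1$; when $m \geq 15$, $\tau \geq 15^{1/3} > t_2$. The inequality $15^{1/3} > t_2$ is equivalent (via $Q(15^{1/3}) > 0$) to $H(14) > H(15)$, which unfolds to the arithmetic fact $(14^{1/3} - 1)(15^{1/3} - 1) > 2$; this is verified from the elementary estimates $14^{1/3} > 2.41$ and $15^{1/3} > 2.466$ (easy by cubing). In either case $\tau \notin (t_1, t_2)$, so $\tau Q(\tau) \geq 0$ and $F \geq 0$ follows without further work.

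The crux is the case $m = 14$, where $\tau(v) = (14 + v^3)^{1/3}$ can lie in $(t_1, t_2)$. Using $\tau^3 = 14 + v^3$, the quantity $F$ collapses to the one-variable expression
\[
F(v) \;=\; 14C + Cv^2 - (1 - C)\tau(v)^2 + \tau(v), \qquad v \in [0, 1].
\]
The defining identity $C(14^{1/3} + 14^{2/3}) = 14^{1/3} - 1$ gives $F(0) = 0$, and $F(1) > 0$ is once again equivalent to $H(14) > H(15)$. To conclude $F \geq 0$ on $[0, 1]$, I would show that $F$ has at most one critical point in $(0, 1)$ and that any such point must be a local maximum. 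Writing $F'(v) = v\,g(v)$ and differentiating $g$, a direct computation yields
\[
\tau(v)^5 \cdot g'(v) \;=\; 14 - v^3 - 28(1 - C)\,\tau(v),
\]
which is strictly negative on $[0, 1]$ because $28(1-C) \cdot 14^{1/3} \geq 14$ reduces to $2\cdot 14^{2/3} + 1 \geq 14^{1/3}$ (true since $2t^2 - t + 1 > 0$ for all real $t$). Hence $g$ is strictly decreasing from $g(0) = 2C > 0$, so $F' = vg$ changes sign at most once on $(0, 1)$, from positive where $g > 0$ to negative where $g < 0$. Any interior critical point of $F$ is therefore a local maximum, so $F$ attains its minimum on $[0, 1]$ at an endpoint, giving $\min_{[0, 1]} F = \min\{F(0), F(1)\} = 0$ and hence $F \geq 0$.

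The principal obstacle is the tightness of the inequality $H(14) > H(15)$, whose slack $(14^{1/3} - 1)(15^{1/3} - 1) - 2 \approx 0.068$ is rather small. It underpins three separate steps of the argument (the bound $15^{1/3} > t_2$, the positivity of $F(1)$, and implicitly the identification of $m = 14$ as the maximizing integer), and although each reduction is elementary, the narrow margin demands careful numerical bookkeeping; this reflects the fact that the continuous maximum of $H$ lies at the irrational $s_0 = (1 + \sqrt{2})^3 \approx 14.07$, only barely closer to $14$ than to $15$.
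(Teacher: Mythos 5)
Your proposal is correct, and it reaches the bound by a genuinely different route from the paper. The paper's proof is a ``localize, then perturb'' argument: it first uses the crude bound $f(s)\leq s$ to reduce to the quadratic inequality $\frac{1-s^{-1/3}}{1+s^{1/3}}>C$, which forces $s\in(14,14.2)$, and then linearizes $f(s)^{\pm 1/3}$ on that short interval to derive a contradiction. You instead partition $[1,\infty)$ into unit intervals $[m,m+1]$, substitute $\tau=f(s)^{1/3}$ and $v=\{s\}^{1/2}$ to convert $h(s)\leq C$ into $\tau Q(\tau)+Cv^2(1-v)\geq 0$ with $Q(\tau)=C\tau^2-(1-C)\tau+1$, dispose of $m\neq 14$ by the sign of $Q$ outside its root interval, and handle $m=14$ by a clean monotonicity argument showing $F'=vg$ with $g$ strictly decreasing, so the minimum of $F$ on $[0,1]$ is attained at an endpoint. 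I checked the computations ($14^{1/3}$ is indeed a root of $Q$, the formula $\tau^5 g'(v)=14-v^3-28(1-C)\tau$ is correct, and $(14^{1/3}-1)(15^{1/3}-1)>2$ holds with the slack you state), and the argument goes through. Your approach is longer but more structural: it exhibits exactly where the extremal value comes from ($F(0)=0$ at $s=14$) rather than merely refuting a hypothetical violation, and the endpoint-minimum argument on $[14,15]$ is more robust than the paper's delicate linearization.

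One small point you should make explicit: your treatment of $m\leq 13$ uses that $\tau\leq t_1$ implies $Q(\tau)\geq 0$, which requires $t_1=14^{1/3}$ to be the \emph{smaller} root of $Q$. This is equivalent to $C\cdot 14^{2/3}<1$, i.e.\ to $14<14^{1/3}+2\cdot 14^{2/3}\approx 14.03$ (equivalently, to the unconstrained maximizer $(1+\sqrt{2})^3\approx 14.07$ exceeding $14$) — true, but by a margin as narrow as the other tight inequalities you flag, so it deserves the same explicit numerical verification as $(14^{1/3}-1)(15^{1/3}-1)>2$.
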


\begin{proof}
    It suffices to show that $\frac{1 - f(s)^{-1/3}}{1 + sf(s)^{-2/3}}$ attains its maximum over $s\geq 1$ at $s = 14$. If not, let $\frac{1 - f(s)^{-1/3}}{1 + sf(s)^{-2/3}} > \frac{1 - 14^{-1/3}}{1 + 14^{1/3}}$ for some $s\geq 1$. As $f(s)\leq s$, we have 
    \begin{equation*}
        \frac{1 - s^{-1/3}}{1 + s^{1/3}} = \frac{1 - s^{-1/3}}{1 + s\cdot s^{-2/3}}\geq \frac{1 - f(s)^{-1/3}}{1 + sf(s)^{-2/3}} > \frac{1 - 14^{-1/3}}{1 + 14^{1/3}}. 
    \end{equation*}
    Multiplying both sides by $1 + s^{-1/3}$ and rearranging, we arrive at the quadratic inequality
    \begin{equation*}
        (1 - 14^{-1/3})s^{2/3} - (14^{1/3} + 14^{-1/3})s^{1/3} + (1 + 14^{1/3}) < 0
    \end{equation*}
    for $s^{-1/3}$. Solving this quadratic inequality gives 
    \begin{equation*}
        s^{1/3}\in \left(14^{1/3}, \frac{2}{14^{1/3} - 1} + 1\right). 
    \end{equation*}
    Therefore, 
    \begin{equation*}
        s\in \left(14, \left(\frac{2}{14^{1/3} - 1} + 1\right)^3\right)\subseteq (14, 14.143)\subseteq (14, 14.2). 
    \end{equation*}
    Let $x = s - 14\in (0, 0.2)$. We have $f(s) = 14 + x^{3/2} = 14 + x\cdot \sqrt x\leq 14 + x/2 = 14(1 + x/28)$. Therefore, 
    \begin{align*}
        f(s)^{-1/3}&\geq 14^{-1/3}(1 + x/28)^{-1/3}\geq 14^{-1/3}(1 - x/84), \\
        f(s)^{-2/3}&\geq 14^{-2/3}(1 + x/28)^{-2/3}\geq 14^{-2/3}(1 - x/42). 
    \end{align*}
    Hence, we have
    \begin{align*}
        \frac{1 - f(s)^{-1/3}}{1 + sf(s)^{-2/3}}&\leq \frac{1 - 14^{-1/3}(1 - x/84)}{1 + (14 + x)\cdot 14^{-2/3}(1 - x/42)}= \frac{1 - 14^{-1/3} + \frac{14^{-1/3}}{84}x}{1 + 14^{1/3} + \frac{14^{1/3}}{21}x - \frac{14^{1/3}}{588}x^2}\\
        &\leq \frac{1 - 14^{-1/3} + 0.005x}{1 + 14^{1/3} + 0.1x}\leq \frac{1 - 14^{-1/3}}{1 + 14^{1/3}}, 
    \end{align*}
    a contradiction. 
\end{proof}

To complete the proof of Theorem~\ref{gamma_5'}, it remains to show the lower bound $\gamma_5'\geq\frac{1 - 14^{-1/3}}{1 + 14^{1/3}}$. 

\begin{lem} \label{simple}
    For any positive integer $n$ and non-negative real numbers $c, d$, if $c^3n^{-2}\leq d\leq c^3$, then there exist non-negative real numbers $x_1, \dots, x_n$ such that 
    \begin{align*}
        \sum_{i = 1}^n x_i = c, \qquad\sum_{i = 1}^n x_i^3 = d. 
    \end{align*}
\end{lem}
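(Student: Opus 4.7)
My plan is to prove Lemma~\ref{simple} by exhibiting an explicit one-parameter family of tuples that interpolates between the two extremal configurations (all entries equal, and a single nonzero entry), and then appealing to the intermediate value theorem.

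First I would dispose of the degenerate cases. If $c = 0$, the hypothesis forces $d = 0$, and taking $x_1 = \cdots = x_n = 0$ works. If $n = 1$, the hypothesis collapses to $d = c^3$, and $x_1 = c$ suffices. So I may assume $n \geq 2$ and $c > 0$.

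For the main case, set $x_1 = t$ and $x_2 = \cdots = x_n = (c - t)/(n - 1)$, parameterized by $t \in [c/n, c]$. Then $\sum_{i=1}^n x_i = c$ automatically, each $x_i$ is non-negative, and the cube sum is
\begin{equation*}
    f(t) := t^3 + \frac{(c - t)^3}{(n - 1)^2}.
\end{equation*}
At the two endpoints one computes $f(c/n) = (c/n)^3 + ((n-1)c/n)^3/(n-1)^2 = c^3/n^3 + (n-1)c^3/n^3 = c^3/n^2$, and $f(c) = c^3$. Since $f$ is continuous on $[c/n, c]$ and the hypothesis gives $d \in [c^3/n^2, c^3]$, the intermediate value theorem produces a $t$ with $f(t) = d$, and the corresponding tuple $(x_1, \ldots, x_n)$ satisfies the lemma.

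There is no real obstacle here; the entire argument is an IVT sandwich between the minimum of $\sum x_i^3$ (attained when all $x_i$ are equal, by power-mean) and the maximum (attained when mass is concentrated on one coordinate). If monotonicity of $f$ were desired for concreteness, a one-line check gives $f'(t) = 3t^2 - 3(c-t)^2/(n-1)^2 \geq 0$ for $t \geq c/n$, but this is not needed for existence.
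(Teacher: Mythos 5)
Your proof is correct and follows essentially the same idea as the paper's: the paper applies the intermediate value theorem via connectedness of the simplex $\{x\in\mathbb{R}_{\geq 0}^n \mid \sum x_i = c\}$ between the configurations $(c,0,\dots,0)$ and $(c/n,\dots,c/n)$, while you make the same interpolation explicit with a one-parameter path. The endpoint computations and degenerate cases all check out.
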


\begin{proof}
    The subset $S = \{(x_1, \dots, x_n)\in \mathbb{R}_{\geq 0}^n|\sum_{i = 1}^n x_i = c\}$ of $\mathbb{R}^n$ is connected. At $(x_1, \dots, x_n) = (c, 0, \dots, 0)\in S$, we have $\sum_{i = 1}^n x_i^3 = c^3\geq d$, and at $(x_1, \dots, x_n) = (c/n, \dots, c/n)\in S$, we have $\sum_{i = 1}^n x_i^3 = c^3n^{-2}\leq d$. The desired result follows as $S$ is connected and $\sum_{i = 1}^n x_i^3$ is continuous in $(x_1, \dots, x_n)$. 
\end{proof}

We now show that $\gamma_5'\geq \frac{1 - 14^{-1/3}}{1 + 14^{1/3}}$ by constructing suitable real numbers $\lambda_1\geq \dots\geq\lambda_n$ satisfying (\ref{conclusion odd trace}) (for $j = 1, 3$) and (\ref{conclusion 2 trace}). 

\begin{thm}
    $\gamma_5' \geq \frac{1 - 14^{-1/3}}{1 + 14^{1/3}}$. 
\end{thm}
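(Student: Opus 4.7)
The plan is to realize the equality case of the upper bound argument in a limit. Tracing back through that proof, equality would require $s = 14$, i.e.\ exactly fourteen negative eigenvalues all equal to $\lambda_n$, every positive eigenvalue other than $\lambda_1$ equal to $0$, and $\sum_i \lambda_i^2 = n\lambda_1$. Combined with (\ref{conclusion odd trace}), these constraints would force both $\lambda_1 = 14|\lambda_n|$ and $\lambda_1 = 14^{1/3}|\lambda_n|$, which is impossible, so the supremum is only approached. To circumvent this obstruction I will insert a small cloud of equal positive eigenvalues that absorb the first-moment excess without perturbing the other moments in the limit.

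Concretely, for each integer $k \geq 14$, I set $n = k + 15$ and declare $\lambda_1 = CL$, $\lambda_2 = \dots = \lambda_{k+1} = Cy$, and $\lambda_{k+2} = \dots = \lambda_n = -C$, for parameters $L, y, C > 0$ to be determined. The identities $\sum_i \lambda_i = 0$ and $\sum_i \lambda_i^3 = 0$ are homogeneous in $C$ and reduce to $L + ky = 14$ and $L^3 + ky^3 = 14$. Eliminating $y = (14 - L)/k$ collapses the pair to the single scalar equation $g(L) = k^2$, where $g(L) := (14 - L)^3/(14 - L^3)$ on $[0, 14^{1/3})$.

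A short computation gives $g'(L)/g(L) = 42(L^2 - 1)/[(14 - L)(14 - L^3)]$, so $g$ is strictly decreasing on $(0, 1)$ and strictly increasing on $(1, 14^{1/3})$, with minimum $g(1) = 169$ and $g(L) \to \infty$ as $L \to 14^{1/3}$. For each $k \geq 14$ there is therefore a unique $L_k \in (1, 14^{1/3})$ with $g(L_k) = k^2$, and $L_k \to 14^{1/3}$, $y_k = (14 - L_k)/k \to 0$ as $k \to \infty$. The ordering requirement $0 < y_k < L_k$ follows from $(14 - L_k)/L_k < 14 \leq k$. I then choose $C = nL_k/(L_k^2 + ky_k^2 + 14)$, which makes $\sum_i \lambda_i^2 = C^2(L_k^2 + ky_k^2 + 14) = nCL_k = n\lambda_1$ hold with equality and thus verifies (\ref{conclusion 2 trace}).

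With these choices, $\lambda_1 + \lambda_n = C(L_k - 1)$, so
\[
\frac{\lambda_1 + \lambda_n}{n} \;=\; \frac{L_k(L_k - 1)}{L_k^2 + k y_k^2 + 14}.
\]
Since $k y_k^2 = (14 - L_k)^2/k \to 0$ and $L_k \to 14^{1/3}$, this tends to $14^{1/3}(14^{1/3} - 1)/(14^{2/3} + 14)$, which equals $(1 - 14^{-1/3})/(1 + 14^{1/3})$ after dividing numerator and denominator by $14^{2/3}$. Each finite $k$ gives a valid witness for $\gamma_5'$, so passing to the limit gives the desired lower bound. The only substantive step is the monotonicity analysis of $g$, and the clean factorization of $g'/g$ makes even this immediate, so I do not anticipate any real obstacle.
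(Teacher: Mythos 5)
Your construction is correct and is essentially the paper's: one large positive entry tending to $14^{1/3}$ times the magnitude of the fourteen equal negative entries, plus a cloud of small nonnegative entries that fixes the first moment while contributing vanishingly to the third. The only difference is bookkeeping — you make the small entries all equal and solve the explicit scalar equation $g(L)=k^2$, whereas the paper leaves them unequal and invokes a connectedness/intermediate-value argument (Lemma~\ref{simple}) with a perturbation parameter $\epsilon$; both are valid and all your computations (the formula for $g'/g$, the ordering check, and the limit $\frac{L(L-1)}{L^2+ky^2+14}\to\frac{1-14^{-1/3}}{1+14^{1/3}}$) check out.
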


\begin{proof}
    Consider any $\epsilon\in (0, 1)$. 
    Let $x_1 = (14 - \epsilon)^{1/3}$, $x_{n - 13} = \dots = x_n = -1$. By Lemma~\ref{simple}, we may take $x_2, \dots, x_{n - 14}\geq 0$ such that
    \begin{align*}
        \sum_{i = 2}^{n - 14} x_i = 14 - (14 - \epsilon)^{1/3}, \qquad\sum_{i = 2}^{n - 14} x_i^3 = \epsilon, 
    \end{align*}
    whenever $n\geq n_\epsilon:= 15 + \sqrt{(14 - (14 - \epsilon)^{1/3})^3/\epsilon}$. We may assume, by permuting $x_2, \dots, x_{n - 14}$, that we have a decreasing sequence $x_1\geq \dots\geq x_n$. With this choice of $x_1, \dots, x_n$, we have
    \begin{align*}
        \sum_{i = 1}^n x_i = 0, \qquad\sum_{i = 1}^n x_i^3 = 0. 
    \end{align*}
    Now, by the Cauchy-Schwarz inequality, we have
    \begin{equation*}
       \sum_{i = 1}^n x_i^2 \leq 14^{2/3} + 14 + \sum_{i = 2}^{n - 14} x_i^2\leq 14^{2/3} + 14 + \sqrt{\left(\sum_{i = 2}^{n - 14} x_i\right)\left(\sum_{i = 2}^{n - 14} x_i^3\right)}\leq 14^{2/3} + 14 + \sqrt{14\epsilon}. 
    \end{equation*}
    Therefore, $\lambda_1\geq \dots\geq \lambda_n$ given by $\lambda_i = \frac{nx_1}{14^{2/3} + 14 + \sqrt{14\epsilon}}x_i$ satisfies (\ref{conclusion odd trace}) (for $j = 1, 3$) and (\ref{conclusion 2 trace}). Hence
    \begin{equation*}
        \gamma_5'\geq\frac{\lambda_1 + \lambda_n}{n} = \frac{x_1(x_1 + x_n)}{14^{2/3} + 14 + \sqrt{14\epsilon}} = \frac{(14 - \epsilon)^{2/3} - (14 - \epsilon)^{1/3}}{14^{2/3} + 14 + \sqrt{14\epsilon}}, 
    \end{equation*}
    and the right-hand side tends to $\frac{14^{2/3} - 14^{1/3}}{14^{2/3} + 14} = \frac{1 - 14^{-1/3}}{1 + 14^{1/3}}$ as $\epsilon\rightarrow 0$, which completes the proof. 
\end{proof}

\end{document}